\documentclass[a4paper, 11pt]{amsart}

\usepackage{amsfonts}
\usepackage{amssymb,amsthm, amsmath}
\usepackage{xcolor}
\usepackage{url}

\vfuzz10pt \hfuzz10pt

\newtheorem{theorem}{Theorem}
\newtheorem{definition}[theorem]{Definition}
\newtheorem{proposition}[theorem]{Proposition}
\newtheorem{corollary}[theorem]{Corollary}
\newtheorem{lemma}[theorem]{Lemma}

\theoremstyle{remark}
\newtheorem{example}[theorem]{Example}

\newtheorem{remark}[theorem]{Remark}
\newtheorem{observation}[theorem]{Observation}

%%%%%%%%%%%%%%%%%%%%%%%%%%%%%%%%%%%%%%%%%%%%%%%%%%%%%%%%%%%%%%%%%%%%
\begin{document}

\title{The short resolution of a semigroup algebra}

\author{I. Ojeda}
\address{Departamento de Matem\'{a}ticas, Universidad de Extremadura,  E-06071 Badajoz, Espa\~na}
\email{ojedamc@unex.es}

\author{{A.} {Vigneron-Tenorio}}
\address{Departamento de Matem\'{a}ticas. Universidad de C\'{a}diz. E-11405 Jerez de la Frontera (C\'{a}diz), Espa\~na}
\email{alberto.vigneron@uca.es}

\thanks{Both authors are partially supported by the projects MTM2012-36917-C03-01 and MTM2015-65764-C3-1-P (MINECO/FEDER, UE), National Plan I+D+I. The first is partially supported by Junta de Extremadura (FEDER funds) and the second by Junta de Andaluc\'{\i}a (group FQM-366)}

\subjclass[2010]{13D02, 14M25 (Primary) 13P10, 68W30 (Secondary).}

\keywords{Free resolutions, Betti numbers, semigroup algebra, affine semigroup, Ap\'ery sets, Frobenius problem.}

%%%%%%%%%%%%%%%%%%%%%%%%%%%%%%%%%%%%%%%%%%%%%%%%%%%%%%%%%%%%%%%%%%%%
\begin{abstract}
This work generalizes the short resolution given in Proc. Amer. Math. Soc. \textbf{131}, 4, (2003), 1081--1091, to any affine semigroup. Moreover, a characterization of Ap\'{e}ry sets is given. This characterization lets compute Ap\'{e}ry sets of affine semigroups and the Frobenius number of a numerical semigroup in a simple way. We also exhibit a new characterization of the Cohen-Macaulay property for simplicial affine semigroups.
\end{abstract}

\maketitle

%%%%%%%%%%%%%%%%%%%%%%%%%%%%%%%%%%%%%%%%%%%%%%%%%%%%%%%%%%%%%%%%%%%%
\section*{Introduction}

Let $\Bbbk$ be a field and let $S$ be a finitely generated commutative submonoid of $\mathbb{Z}^d$ such that $S \cap (-S) = \{0\}$. There is a large literature on the study and computation of minimal free resolutions of the semigroup algebra $\Bbbk[S] = \bigoplus_{\mathbf{a} \in S} \Bbbk \chi^\mathbf{a}$ (see, e.g. \cite{OjVi} and the references therein). Most works on this topic consider $\Bbbk[S]$ as a $\Bbbk[\textbf{X}]( := \Bbbk[X_1, \ldots, X_n])-$module with the structure given by $\Bbbk[\textbf{X}] \to \Bbbk[S];\ X_i \mapsto \chi^{\mathbf{a}_i}$, where $\{\mathbf{a}_1, \ldots, \mathbf{a}_n\}$ is a (fixed) system of generators of $S$.

In \cite{pison03}, Pilar Pis\'on proposed a new and original resolution of $\Bbbk[S]$. She considered $\Bbbk[S]$ as a module over a polynomial ring in fewer variables determined by the extreme rays of the rational cone generated by $S$ and she explicitly constructed a free resolution that she called the \emph{short resolution} of $\Bbbk[S]$. In her construction it is implicitly assumed that the generators of $S$ corresponding to extreme rays are $\mathbb{Z}-$linearly independent. This actually happens when the semigroup is simplicial.

The original aim of this work was to avoid the simplicial hypothesis on the semigroup, by giving the corresponding generalization of Pis\'on's construction in \cite{pison03}. However, during the course of this work, we realized that some improvements can be made so that some results and many proofs from the Pison's paper have been simplified.

One of the original contributions of professor Pis\'on in \cite{pison03} consisted in the explicit computation of test sets for the Apery sets of affine semigroups. To do that she used Gr\"{o}bner bases techniques with respect to a particular local term order. We improve her method by obtaining a new and explicit description of the Apery sets without the use of local term orders (Theorem \ref{Th2}). This allows us to formulate an easy algorithm for the computation of Apery sets of affine semigroups and consequently an algorithm to compute the Frobenius number of a numerical semigroup, as describe in Section \ref{Sect3}. Despite that both algorithms seem to have a good computational behaviour, we would like to emphasize the simplicity of our construction that relies on the computation of just one Gr\"{o}bner basis with respect to a particular (global) term order.

In Section \ref{Sect4}, we give a presentation of any semigroup algebra as a module over a ring in so many variables as the dimension of the cone of the semigroup has (Theorem \ref{Th1}) without assuming the simplicity hypothesis. This completes the construction of the short resolution given in \cite{pison03}. The results of the fourth section, combined with our computational description of the Apery sets, leads to a new characterization of the Cohen-Macaulayness of simplicial affine semigroups (Corollary \ref{Cor CarCM+S}).
  
Finally, in the last section, we propose a new combinatorial description of the Pison's resolution of a semigroup algebra and we explicitly determine the isomorphisms from the combinatorial side to the minimal generators for the presentation given in Section \ref{Sect4}.

%%%%%%%%%%%%%%%%%%%%%%%%%%%%%%%%%%%%%%%%%%%%%%%%%%%%%%%%%%%%%%%%%%%%
\section{Preliminaries}\label{Sect2}

Given a finite subset $\mathcal{A} = \{\mathbf{a}_1, \ldots, \mathbf{a}_n\}$ of $\mathbb Z^d,$ we consider the subsemigroup $S$ of $\mathbb Z^d$ generated by $\mathcal{A},$ that is to say, $S = \mathbb{N} \mathbf{a}_1 + \ldots + \mathbb{N} \mathbf{a}_n,$ where $\mathbb{N}$ denotes the set of nonnegative integers. $S$ is a so-called affine semigroup, in particular, it is finitely generated, cancellative and commutative semigroup with zero element.

Associated to $\mathcal{A}$ is the surjective function $$\deg_\mathcal{A} : \mathbb{N}^n \longrightarrow S;\ \mathbf{u} = (u_1, \ldots, u_n) \longmapsto \deg_\mathcal{A}(\mathbf{u}) = \sum_{i=1}^n u_i \mathbf{a}_i$$ This map is called the factorization map of $S$ in the literature and, accordingly, $\deg^{-1}_\mathcal{A}(\mathbf{a})$ is called the set of factorizations of $\mathbf{a} \in S.$

Notice that the cardinality of $\deg^{-1}_\mathcal{A}(\mathbf{a}),\ \mathbf{a} \in S,$ is not necessarily finite. The necessary and sufficient condition for the finiteness of factorizations is that $S \cap (-S) = 0$ (see \cite[Proposition 1.1]{BCMP}); equivalently, \begin{equation}\label{rc_ecu1} u_1 \mathbf{a}_1 + \ldots + u_n \mathbf{a}_n = 0,\ (u_1, \ldots, u_n) \in \mathbb{N}^n \Longrightarrow u_1 = \ldots = u_n = 0.\end{equation} Throughout this, we will assume that $\mathcal{A}$ satisfies this condition.

Let $\Bbbk$ be a field. The map $\deg_\mathcal{A}$ induces the following surjective $\Bbbk-$algebra homomorphism  $$\begin{array}{rcl} \varphi_\mathcal{A} : \Bbbk[\mathbb{N}^n] = \Bbbk[X_1, \ldots, X_n]& \longrightarrow & \Bbbk[S] := \bigoplus_{\mathbf{a} \in S} \Bbbk\, \chi^\mathbf{a};\smallskip \\ \mathbf{X}^\mathbf{u} := X_1^{u_1} \cdots X_n^{u_n} & \longmapsto & \chi^{\deg_\mathcal{A}(\mathbf{u})}\end{array}$$

Observe that if we consider the grading $\deg(X_i) = \mathbf{a}_i,\ i = 1, \ldots, n,$ on $\Bbbk[X_1, \ldots, X_n],$ then $\varphi_\mathcal{A}$ is homogeneous of degree zero. Hence, both the toric ideal $I_\mathcal{A} := \ker(\varphi_\mathcal{A})$ and the coordinate ring $\Bbbk[S] \cong \Bbbk[X_1, \ldots, X_n]/I_\mathcal{A}$ are homogeneous for the grading determined by $\mathcal{A}$.

In the following, unless otherwise stated, we set $\deg(X_i) = \mathbf{a}_i,\ i = 1, \ldots, n$. That is to say, we will consider $\Bbbk[\mathbf X]$ multigraded by the semigroup $S$.

The necessary and sufficient condition for the finiteness of factorizations assumed above (see formula \ref{rc_ecu1}), implies that there exists a minimal $S-$graded free resolution of $\Bbbk[S]$, which is defined by the property that all of the differentials become zero when tensored with $\Bbbk \cong \Bbbk[\mathbf X]/\mathfrak{m},$ where $\mathfrak{m} = \langle X_1, \ldots, X_m \rangle$ (See \cite[Section 8.3]{MS}). This justify the next definition: 

\begin{definition}
The $i-$th (multigraded) Betti number of $\Bbbk[S]$ in degree $\mathbf{a}$ is $$\beta_{i,\mathbf{a}}(\Bbbk[S]) := \dim_{\Bbbk} \mathrm{Tor}_i^{\Bbbk[\mathbf X]}(\Bbbk, \Bbbk[S])_\mathbf{a}.$$
\end{definition}

%%%%%%%%%%%%%%%%%%%%%%%%%%%%%%%%%%%%%%%%%%%%%%%%%%%%%%%%%%%%%%%%%%%%
\section{Computation of Apery sets}\label{Sect3}

Lt $\mathcal{A} = \{\mathbf{a}_1, \ldots, \mathbf{a}_n\} \subseteq \mathbb Z^d$ be satisfying (\ref{rc_ecu1}), we define the polyhedral cone of $\mathcal{A}$ as follows $$\mathrm{pos}(\mathcal{A}) := \{ \lambda_1 \mathbf{a}_1 + \ldots + \lambda_n \mathbf{a}_n\ \mid\ \lambda_1, \ldots, \lambda_n \in \mathbb{Q}_{\geq 0}\} \subset \mathbb{Q}^d.$$

Without loss of generality, by relabelling if necessary, we may assume that  $\mathrm{pos}(\mathcal{A}) = \mathrm{pos}(\{\mathbf{a}_1, \ldots, \mathbf{a}_r\}), r \leq n.$ Thus, in the following we will write $E = \{\mathbf{a}_1, \ldots, \mathbf{a}_r\}$ and $\mathbf{b}_i = \mathbf{a}_{r+i},\ i = 1, \ldots, s:=n-r.$ So that, \begin{equation}\label{ecu3} \mathcal{A} = E \cup B\end{equation} where $B := \{\mathbf{b}_1, \ldots, \mathbf{b}_s\}.$ This is called a convex partition in \cite[Definition 4.1]{Campillo}. 

\begin{observation}\label{Obs Simplicial}
If $S$ is a simplicial semigroup, that is to say, if $\mathrm{pos}(\mathcal{A})$ can be generated by $\dim_{\mathbb{Q}}(pos(\mathcal{A}))$ 
elements of $\mathcal{A}$, we may take $r$ above equals to $\mathrm{rank}(\mathbb{Z} \mathcal A) = \dim_{\mathbb{Q}}(pos(\mathcal{A}))$, where $
\mathbb{Z} \mathcal A$ denotes the subgroup of $\mathbb{Z}^d$ generated by $\mathcal{A}$. In this case, \eqref{ecu3} is also called a simplicial partition.
\end{observation}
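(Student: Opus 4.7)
The plan splits into two claims: (i) the general identity $\mathrm{rank}(\mathbb{Z}\mathcal{A}) = \dim_{\mathbb{Q}}(\mathrm{pos}(\mathcal{A}))$, valid for any $\mathcal{A}$ satisfying (\ref{rc_ecu1}), and (ii) that under the simplicial hypothesis one can relabel the generators so that $r$ equals this common value in the partition (\ref{ecu3}).

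For (i), I would chase rational spans. The inclusions $\mathcal{A} \subseteq \mathrm{pos}(\mathcal{A}) \subseteq \mathbb{Q}\mathcal{A}$---the second because every element of the cone is a nonnegative $\mathbb{Q}$-combination of the generators---pass to $\mathbb{Q}$-linear spans to give $\mathbb{Q}\mathcal{A} = \mathbb{Q}\cdot\mathrm{pos}(\mathcal{A})$. Since $\dim_{\mathbb{Q}}(\mathbb{Q}\mathcal{A}) = \mathrm{rank}(\mathbb{Z}\mathcal{A})$ for finitely generated subgroups of $\mathbb{Z}^d$, the identity in (i) follows.

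For (ii), set $d := \dim_{\mathbb{Q}}\mathrm{pos}(\mathcal{A})$. First note that condition (\ref{rc_ecu1}) forces $\mathrm{pos}(\mathcal{A}) \cap (-\mathrm{pos}(\mathcal{A})) = \{0\}$ (clear denominators in any putative rational relation and apply (\ref{rc_ecu1})), so the cone is pointed. The simplicial hypothesis yields a subset $\{\mathbf{a}_{i_1}, \ldots, \mathbf{a}_{i_d}\} \subseteq \mathcal{A}$ whose positive hull equals $\mathrm{pos}(\mathcal{A})$. In a pointed $d$-dimensional cone, any generating set of size $d$ is automatically $\mathbb{Q}$-linearly independent and places exactly one vector on each extreme ray---otherwise some element could be dropped without shrinking the cone, contradicting the fact that $\dim_{\mathbb{Q}}\mathrm{pos}(\mathcal{A}) = d$. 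Relabelling the chosen elements as $\mathbf{a}_1, \ldots, \mathbf{a}_d$ then recovers the setup preceding the observation with $r = d$.

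I do not anticipate any real obstacle: the statement is essentially unwinding the definition of a simplicial cone together with the standard agreement between a cone's dimension and the rank of the group it generates. The only mildly delicate step is the linear-independence argument in (ii), which hinges on the pointedness of $\mathrm{pos}(\mathcal{A})$ just verified from (\ref{rc_ecu1}).
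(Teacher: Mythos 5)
Your argument is correct. Note that the paper offers no proof of this Observation at all---it is stated as an unproved remark---so there is nothing to compare against; your write-up simply supplies the standard justification (the equality $\mathrm{rank}(\mathbb{Z}\mathcal{A})=\dim_{\mathbb{Q}}\mathbb{Q}\mathcal{A}=\dim_{\mathbb{Q}}\mathrm{pos}(\mathcal{A})$ by comparing $\mathbb{Q}$-spans, plus the relabelling permitted by the simplicial hypothesis). One small simplification: the linear independence in step (ii) needs neither pointedness nor the ``drop a generator'' detour, since $d$ vectors whose $\mathbb{Q}$-span is $d$-dimensional are automatically independent (and this in turn forces the cone to be simplicial, hence pointed); also, you reuse the symbol $d$, which the paper has already reserved for the ambient rank of $\mathbb{Z}^d$.
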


Let $S$ be the semigroup generated by $\mathcal{A}$ and let $\Bbbk[\mathbf{Y}]$ and $\Bbbk[\mathbf{Y},\mathbf{Z}]$ be the polynomial rings in $r$ and $n$ variables, respectively, over a field $\Bbbk.$

Let $\prec$ be a monomial order on $\Bbbk[\mathbf{Y},\mathbf{Z}]$ defined as follows: $\mathbf{Y}^{\mathbf{v}'} \mathbf{Z}^{\mathbf{u}'} \prec \mathbf{Y}^\mathbf{v} \mathbf{Z}^\mathbf{u}$ if and only if the leftmost nonzero entry of $\deg_\mathcal{A}(\mathbf{u},\mathbf{v}) - \deg_\mathcal{A}(\mathbf{u}',\mathbf{v}')$ is positive or $\deg_\mathcal{A}(\mathbf{u},\mathbf{v}) = \deg_\mathcal{A}(\mathbf{u}',\mathbf{v}')$ and $\mathbf{Y}^{\mathbf{v}'} \mathbf{Z}^{\mathbf{u}'} \prec_{revlex} \mathbf{Y}^\mathbf{v} \mathbf{Z}^\mathbf{u}$, where $\prec_{revlex}$ is a reverse lexicographic ordering on $\Bbbk[\mathbf{Y},\mathbf{Z}]$ such that $Y_i \prec Z_j,$ for every $i = 1, \ldots, r$ and $j = 1, \ldots, s.$  By abusing of terminology, we  will say that $\prec$ is an $\mathcal{A}-$graded reverse lexicographical monomial order on $\Bbbk[\mathbf{Y},\mathbf{Z}]$ such that $Y_i \prec Z_j,$ for every $i = 1, \ldots, r$ and $j = 1, \ldots, s.$

Set $\varphi_\mathcal{A} : \Bbbk[\mathbf{Y},\mathbf{Z}] \to  \Bbbk[S];\ \mathbf{Y}^\mathbf{v} \mathbf{Z}^\mathbf{u} \mapsto \chi^{\deg_\mathcal{A}(\mathbf{v},\mathbf{u})}$ and let $\mathcal{G}_\prec(I_\mathcal{A})$ be the reduced Gr\"obner basis of $I_\mathcal{A} = \ker(\varphi_\mathcal{A})$ with respect to $\prec$. We will write $\mathcal{Q}$ for the exponents of the standard monomials in the variables $Z_1, \ldots, Z_s,$ that is to say, $$\mathcal{Q} = \{ \mathbf{u} \in \mathbb{N}^s \mid \mathbf{Z}^\mathbf{u} \not\in \mathrm{in}_\prec(I_\mathcal{A})\}.$$

\begin{proposition}
The set $\mathcal{Q}$ is finite.
\end{proposition}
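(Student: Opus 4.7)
The plan is to bound the exponents in $\mathcal{Q}$ coordinate-by-coordinate by exhibiting, for each $j \in \{1,\ldots,s\}$, a monomial of the form $Z_j^{N_j}$ in the initial ideal $\mathrm{in}_\prec(I_\mathcal{A})$.

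First, I would use the convex partition assumption that $\mathrm{pos}(\mathcal{A}) = \mathrm{pos}(E)$. Since each $\mathbf{b}_j$ lies in $\mathrm{pos}(E)$, there exist nonnegative rationals $\lambda_{1j}, \ldots, \lambda_{rj}$ with $\mathbf{b}_j = \sum_{i=1}^r \lambda_{ij}\, \mathbf{a}_i$. Clearing a common denominator gives a positive integer $N_j$ and nonnegative integers $m_{1j},\ldots,m_{rj}$ such that
\[
N_j\, \mathbf{b}_j \;=\; \sum_{i=1}^r m_{ij}\, \mathbf{a}_i \quad \text{in } S.
\]
Equivalently, the binomial $f_j := Z_j^{N_j} - \prod_{i=1}^r Y_i^{m_{ij}}$ lies in $I_\mathcal{A}$.

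Next, I would determine which of the two monomials in $f_j$ is the $\prec$-leading one. Both have the same $\mathcal{A}$-degree (namely $N_j \mathbf{b}_j$), so the tie is broken by the reverse lexicographic refinement with $Y_i \prec Z_j$. Since the pure-$Y$ monomial $\prod_i Y_i^{m_{ij}}$ has zero exponent in every $Z$-variable while $Z_j^{N_j}$ has positive exponent in $Z_j$, the rule $Y_i \prec Z_j$ forces $\mathrm{in}_\prec(f_j) = Z_j^{N_j}$. Consequently $Z_j^{N_j} \in \mathrm{in}_\prec(I_\mathcal{A})$.

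Finally, because $\mathrm{in}_\prec(I_\mathcal{A})$ is an ideal, any $\mathbf{u} \in \mathbb{N}^s$ with $u_j \geq N_j$ for some $j$ yields $\mathbf{Z}^\mathbf{u} \in \mathrm{in}_\prec(I_\mathcal{A})$, hence $\mathbf{u} \notin \mathcal{Q}$. Therefore
\[
\mathcal{Q} \;\subseteq\; \prod_{j=1}^s \{0, 1, \ldots, N_j - 1\},
\]
which is finite. The only non-routine step is verifying that $Z_j^{N_j}$ is indeed the leading term of $f_j$ under $\prec$; the rest is bookkeeping and a direct appeal to $\mathrm{pos}(\mathcal{A}) = \mathrm{pos}(E)$.
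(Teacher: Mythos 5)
Your proof is correct and follows essentially the same route as the paper: express each $\mathbf{b}_j$ as a nonnegative rational combination of the $\mathbf{a}_i$, clear denominators to get a binomial $Z_j^{N_j}-\prod_i Y_i^{m_{ij}}\in I_\mathcal{A}$, and conclude that $\mathcal{Q}$ is contained in a box. The only difference is that you explicitly justify why $Z_j^{N_j}$ is the leading term (via the revlex tie-break putting the $Y_i$ last), a step the paper leaves implicit.
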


\begin{proof}
Since $\mathbf{b}_j \in \mathrm{pos}(\mathcal{A}) = \mathrm{pos}(\{\mathbf{a}_1, \ldots, \mathbf{a}_r\})$, there exist $u_j, v_{1j}, \ldots, v_{rj} \in \mathbb{N}$ such that $u_j \mathbf{b}_j = \sum_{i = 1}^r v_{ij} \mathbf{a}_i,$ for every $j = 1, \ldots, s$ (i.e. $Z_j^{u_j} - Y_1^{v_1} \cdots Y_r^{v_r} \in I_\mathcal{A},$ for each $j$). Therefore, $Z^{\mathbf{u}'} \in \mathrm{in}_\prec(I_\mathcal{A}),$ for every $\mathbf{u}' \in \mathbb{N}^s$ whose $j-$th coordinate is larger than $u_j$ for some $j = 1, \ldots, s$.
\end{proof}

We recall that 	the Ap\'ery set of $S$ relative to $E,\ \mathrm{Ap}(S,E)$, is defined as $$\mathrm{Ap}(S,E) = \{ \mathbf{a} \in S\ \mid\ \mathbf{a} - \mathbf{e} \not\in S,\ \forall \mathbf{e} \in E \}.$$

Our main result in this section improves \cite[Lemma 1.2]{pison03}. In contrast to \cite{pison03}, we are considering a global monomial order. This will have important consequences for the forthcoming constructions.

Observe that the natural injection $\iota : \mathbb{N}^s \hookrightarrow \mathbb{N}^n; \mathbf{u} \mapsto (0,\mathbf{u})$ allows us to restrict $\deg_\mathcal{A} \circ\ \iota(-)$ to $\mathcal{Q}$.

\begin{theorem}\label{Th2}
The restriction of $\deg_\mathcal{A} \circ\ \iota (-)$ to $\mathcal{Q}$ defines a bijective map $\mathcal{Q} \to \mathrm{Ap}(S,E)$.
\end{theorem}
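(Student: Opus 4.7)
The plan is to verify the three properties (well-definedness in $\mathrm{Ap}(S,E)$, injectivity, and surjectivity) by exploiting the fact that the toric ideal $I_\mathcal{A}$ is binomial, so $\mathcal{G}_\prec(I_\mathcal{A})$ consists of binomials and reductions preserve the "monomial" shape of a polynomial (up to sign). The crucial ingredient is the tiebreaker in $\prec$: among monomials of the same $\mathcal{A}$-degree, those divisible by some $Y_i$ are ranked \emph{above} pure $\mathbf{Z}$-monomials in $\prec$, because in the revlex tiebreaker with $Y_i \prec Z_j$ the variables $Y_i$ are the "smallest" and their presence raises the monomial.

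For well-definedness, I take $\mathbf{u} \in \mathcal{Q}$, set $\mathbf{a} = \deg_\mathcal{A} \circ \iota(\mathbf{u}) = \sum_{j} u_j \mathbf{b}_j \in S$, and suppose toward a contradiction that $\mathbf{a} - \mathbf{a}_i \in S$ for some $i \in \{1,\dots,r\}$. Writing $\mathbf{a} - \mathbf{a}_i = \deg_\mathcal{A}(\mathbf{v}',\mathbf{u}')$ for some $(\mathbf{v}',\mathbf{u}') \in \mathbb{N}^r\times\mathbb{N}^s$, the binomial $Y_i\,\mathbf{Y}^{\mathbf{v}'}\mathbf{Z}^{\mathbf{u}'} - \mathbf{Z}^\mathbf{u}$ lies in $I_\mathcal{A}$. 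Both monomials share the same $\mathcal{A}$-degree, so $\prec$ decides by the revlex tiebreaker, which favors $\mathbf{Z}^\mathbf{u}$ as the leading term (the competing monomial carries $Y_i$, a variable smaller than every $Z_j$). Hence $\mathbf{Z}^\mathbf{u} \in \mathrm{in}_\prec(I_\mathcal{A})$, contradicting $\mathbf{u}\in\mathcal{Q}$.

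Injectivity is immediate from the same idea: if $\mathbf{u},\mathbf{u}'\in\mathcal{Q}$ satisfy $\deg_\mathcal{A}\circ\iota(\mathbf{u}) = \deg_\mathcal{A}\circ\iota(\mathbf{u}')$ with $\mathbf{u}\neq\mathbf{u}'$, then $\mathbf{Z}^\mathbf{u} - \mathbf{Z}^{\mathbf{u}'}\in I_\mathcal{A}$ is nonzero, so its $\prec$-leading monomial (one of $\mathbf{Z}^\mathbf{u}, \mathbf{Z}^{\mathbf{u}'}$) lies in $\mathrm{in}_\prec(I_\mathcal{A})$, contradiction. For surjectivity, I start with any $\mathbf{a}\in\mathrm{Ap}(S,E)$, pick a factorization so that $\mathbf{a} = \deg_\mathcal{A}(\mathbf{v},\mathbf{u})$, and reduce $\mathbf{Y}^\mathbf{v}\mathbf{Z}^\mathbf{u}$ to its normal form $\mathbf{Y}^{\mathbf{v}''}\mathbf{Z}^{\mathbf{u}''}$ modulo $\mathcal{G}_\prec(I_\mathcal{A})$ (the normal form is a single monomial because the Gröbner basis is binomial and each reduction step replaces a monomial by a monomial). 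This normal form has the same $\mathcal{A}$-degree $\mathbf{a}$ and is a standard monomial. If $\mathbf{v}''\neq 0$, then some $Y_k$ divides it, forcing $\mathbf{a}-\mathbf{a}_k \in S$ and contradicting $\mathbf{a}\in\mathrm{Ap}(S,E)$. Thus $\mathbf{v}''=0$, $\mathbf{u}''\in\mathcal{Q}$, and $\mathbf{u}''\mapsto\mathbf{a}$.

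The main obstacle is to argue cleanly that the revlex tiebreaker indeed prefers pure $\mathbf{Z}$-monomials over those containing any $Y_i$ at the same $\mathcal{A}$-degree; once this is correctly stated, both well-definedness and injectivity are short, and surjectivity follows from the binomial nature of the reduced Gröbner basis together with the defining property of the Apéry set.
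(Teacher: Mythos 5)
Your proof is correct and follows essentially the same route as the paper's: well-definedness and injectivity come from observing that a binomial $\mathbf{Z}^{\mathbf{u}} - Y_i\,m \in I_{\mathcal{A}}$ has leading term $\mathbf{Z}^{\mathbf{u}}$, and surjectivity from taking normal forms modulo $\mathcal{G}_\prec(I_{\mathcal{A}})$. One caveat: your opening paragraph states the tiebreaker backwards --- it claims that monomials divisible by some $Y_i$ are ranked \emph{above} pure $\mathbf{Z}$-monomials of the same $\mathcal{A}$-degree and that the presence of $Y_i$ ``raises'' the monomial --- whereas the correct fact, and the one you actually invoke in the body (``favors $\mathbf{Z}^{\mathbf{u}}$ as the leading term''), is that in reverse lexicographic comparison the smallest variables \emph{lower} a monomial, so the $Y$-divisible monomial is the trailing term. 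The argument itself is sound; only that introductory sentence needs fixing (the paper's own proof contains the analogous slip, writing $\mathbf{Z}^{\mathbf{u}} \prec Y_i\mathbf{Y}^{\mathbf{v}}\mathbf{Z}^{\mathbf{w}}$ where $\succ$ is meant). Your surjectivity step is marginally more self-contained than the paper's: you reduce an arbitrary factorization and then use the Ap\'ery condition to rule out $Y$-divisibility of the normal form, whereas the paper first notes that every factorization of an Ap\'ery element uses only elements of $B$ and reduces a pure $\mathbf{Z}$-monomial; the content is the same.
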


\begin{proof}
Let $\mathbf{u} \in \mathcal{Q}$ and set $\mathbf{q} = \deg_\mathcal{A}\big(\iota(\mathbf{u})\big).$ If $\mathbf{q} \in \mathrm{Ap}(S,E),$ then there exists $i \in \{1, \ldots, r\}$ such that  $\mathbf{q}-\mathbf{a}_i = \sum_{i=1}^r v_i \mathbf{a}_i + \sum_{j=1}^s w_j \mathbf{b}_j \in S$, thus we have a binomial $\mathbf{Z}^\mathbf{u} - Y_i \mathbf{Y}^\mathbf{v} \mathbf{Z}^\mathbf{w} \in I_\mathcal{A}$. Since the monomials $\mathbf{Z}^\mathbf{u}$ and $Y_i \mathbf{Y}^\mathbf{v} \mathbf{Z}^\mathbf{w}$ are distinct and $\mathbf{Z}^\mathbf{u} \prec Y_i \mathbf{Y}^\mathbf{v} \mathbf{Z}^\mathbf{w}$ because $Y_i$ divides $Y_i \mathbf{Y}^\mathbf{v} \mathbf{Z}^\mathbf{w}$, we conclude that $\mathbf u \notin \mathcal Q$, a contradiction.  Therefore, the image of the restriction of $\deg_\mathcal{A} \circ\ \iota(-)$ to $\mathcal{Q}$ lies in $\mathrm{Ap}(S,E)$.

Consider now $\mathbf{q} \in \mathrm{Ap}(S,E).$ Then, $\mathbf{q}$ admits a factorization in the form $\mathbf{q} = \sum_{i=1}^s v_i \mathbf{b}_i.$ The remainder of the division of $\mathbf{Z}^\mathbf{v}, \mathbf{v} = (v_1, \ldots, v_s) \in \mathbb{N}^s,$ by $\mathcal{G}_\prec(I_\mathcal{A})$ is a monomial $\mathbf{Z}^\mathbf{u}$ of $\mathcal{A}-$degree $\mathbf{q}$ which does not lie in $\mathrm{in}_\prec(I_\mathcal{A}).$ Hence $\mathbf{u} \in \mathcal{Q}$ and $\deg_\mathcal{A}\big(\iota(\mathbf{u})\big) = \mathbf{q}$ which proves the surjectivity of our map.

Finally, in order to prove that $\deg_\mathcal{A}\big(\iota(\mathbf{u})\big) = \deg_\mathcal{A}\big(\iota(\mathbf{v})\big)$ implies $\mathbf{u} = \mathbf{v},$ it suffices to observe that $f := \mathbf{Z}^\mathbf{u} - \mathbf{Z}^\mathbf{v} \in I_\mathcal{A}.$ So, if $\mathbf{u} \neq \mathbf{v},$ then $\mathrm{in}_\prec(f) = \mathbf{Z}^\mathbf{u}$ (or $\mathrm{in}_\prec(f) = \mathbf{Z}^\mathbf{v}$), that is to say, $\mathbf{u} \not\in \mathcal{Q}$ (or $\mathbf{v} \not\in \mathcal{Q}$) which leads us to a contradiction.
\end{proof}

Notice that Theorem \ref{Th2} gives an easy algorithm for the computation of Ap\'ery sets. A similar algorithm, based on a purely semigroup approach, can be found in \cite{MOT}. 

In the particular case when $S$ is a numerical semigroup (that is, to say if $S$ is a submonoid of $\mathbb{N}$ with finite complement in $\mathbb{N}$), Theorem \ref{Th2} gives an algorithm for computing the Frobenius number of $S, g(S)$ (i.e., the greatest natural number not belonging to $S$). It suffices to recall the well-known formula due to R. Ap\'{e}ry (see, e.g., \cite[Proposition 10.4]{GSR})
$$
g(S) = \max\{\mathrm{Ap}(S,\mathbf{a}_1)\} - \mathbf{a}_1
$$
and the fact that any nonzero element of a numerical semigroup generates the corresponding polyhedral cone in $\mathbb{Q}$. 

It is fair to point out that Marcel Morales and Nguyen Thi Dung have recently produced an algorithm by using similar arguments for the computation of the Frobenius number (see \cite{Morales}). Professor Morales has informed us that similar techniques were used by Einstein et al. \cite{einstein} and Roune \cite{roune} to give sophisticated algorithms for the computation of the Frobenius number of numerical semigroup. However, in these papers the important role of the Apery sets is not observed.

\begin{example}
The following example is taken from \cite{Zarzuela}. Let $\mathcal{A} = \{8,11,18\}$ and let $\prec$ be the
$\mathcal{A}-$graded reverse lexicographical monomial order on $\Bbbk[Y,Z_1,Z_2],$ such that $Y \prec Z_2 \prec Z_1$. We computed with Singular \cite{DGPS} the reduced Gr\"{o}bner basis of $I_\mathcal{A} \subseteq \Bbbk[Y,Z_1,Z_2]$ with respect $\prec$: $$\mathcal{G}_\prec(I_\mathcal{A}) = \{Z_1^2 Z_2-Y^5, Z_1^4-Z_2^2 Y, Z_2^3-Z_1^2 Y^4\}.$$ Clearly, $\mathcal{Q} = \{1, Z_1, Z_2, Z_1^2, Z_1 Z_2, Z_2^2, Z_1^3, Z_1 Z_2^2 \}$ and $$\mathrm{Ap}(S,\{8\}) = \deg_\mathcal{A}(\mathcal{Q}) = \{0,11,18,22,29,36,33,47\}.$$
In this case, the Frobenius number is $47-8 = 39$.

The whole process can be automated easily, as the following Singular code shows:

\begin{verbatim}
     LIB "toric.lib";
     LIB "general.lib";
     intmat A[1][3] = 18,11,8;
     ring r = 0, (Z(1..size(A)-1), Y), dp;
     ideal i = toric_ideal(A,"hs");
     ring s = 0, (Z(1..size(A)-1), Y), wp(A);
     ideal i = imap(r,i);
     ideal m = lead(std(i));
     ideal Q = kbase(std(m+Y));
     int n;
     intmat Ap[1][size(Q)];
     for (n = 1; n <= size(Q); n = n + 1)
         {Ap[1,n] = A*intmat(leadexp(Q[n]));}
     int g = sort(intvec(Ap))[1][size(Q)]-A[1,size(A)];
     g;
\end{verbatim}
\end{example}

The interested reader is encouraged to change the third line of the code above, in order to compute the Frobenius number of his or her favorite semigroup.

The first author joint with {C.J.}~Moreno have written a function in Singular (\cite{DGPS}) for the computation of the Ap\'ery set and the Frobenius number of a numerical semigroup. The library is available at \url{http://matematicas.unex.es/~ojedamc/inves/apery.lib}. Using this library, we have computed the Frobenius of the numerical semigroup in \cite[Example 5.5]{Morales} in less than $0.6$ seconds with an Intel$\text{}^\copyright$ Core$\text{}^{\text{TM}}$ i5-2450M CPU @ 2.50GHz$\times$4.

At Marcel Morales' webpage, \url{https://www-fourier.ujf-grenoble.fr/~morales/}, it can be found a software called {\tt Frobenius-public.exe} for computing the Ap\'{e}ry set and the Frobenius number of a numerical semigroup. This software uses the algorithms presented in \cite{Morales}. We have used this software and the previous Singular's library {\tt apery.lib} to compare the computational behaviour of our algorithms and the algorithms in \cite{Morales}. In general, these algorithms show a similar behaviour, but for numerical semigroups with large Frobenius number our algorithm could be a little better. For example, the Frobenius number of the semigroup generated by \begin{align*} \{ 1051, 1071, 1087, 1099  ,1129, 1139,1199, 1207,\\ 1211, 1213, 3331,4325, 5511, 10311, 11421\}, \end{align*} is $11703$. The program {\tt apery.lib} needs one second to compute its Ap\'{e}ry set and its Frobenius number, and {\tt Frobenius-public.exe} needs approximately fifteen seconds.

%%%%%%%%%%%%%%%%%%%%%%%%%%%%%%%%%%%%%%%%%%%%%%%%%%%%%%%%%%%%%%%%%%%%
\section{Pison's free resolution}\label{Sect4}

We keep the notation of the previous section.
Let $S_E$ be the subsemigroup of $S$ generated by $E$ and set $$\Bbbk[S_E] := \bigoplus_{\mathbf{a} \in S_E} \Bbbk \chi^\mathbf{a}.$$ The composition $\Bbbk[\mathbf{Y}] \stackrel{\varphi_E}{\longrightarrow} \Bbbk[S_E] \hookrightarrow \Bbbk[S]$ defines a natural structure of $\Bbbk[\mathbf{Y}]-$module on $\Bbbk[S].$

Obviously, $\Bbbk[\mathbf{Y}]$ is multigraded by $S$. So, there exists a minimal $\mathcal{A}-$graded free resolution of $\Bbbk[S]$ as $\Bbbk[\mathbf{Y}]-$module (see \cite[Section 8.3]{MS}). In order to compute effectively this resolution, an $\mathcal{A}-$graded presentation of $\Bbbk[S]$ as $\Bbbk[\mathbf{Y}]-$module is required.

\begin{proposition}
The set $\{ \mathbf{Z}^\mathbf{u}\ \mid\ \mathbf{u} \in \mathcal{Q} \}$ is a minimal system of generators of $\Bbbk[S]$ as $\Bbbk[\mathbf{Y}]-$module.
\end{proposition}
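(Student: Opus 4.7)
The plan is to establish spanning via the division algorithm with respect to $\mathcal{G}_\prec(I_\mathcal{A})$, and minimality via the graded Nakayama lemma together with the Ap\'{e}ry set characterization of Theorem \ref{Th2}.

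For spanning, I would take an arbitrary homogeneous $\chi^{\mathbf a} \in \Bbbk[S]$, choose any factorization of $\mathbf a$ to produce a preimage $\mathbf{Y}^{\mathbf v}\mathbf{Z}^{\mathbf w}$ under $\varphi_\mathcal{A}$, and compute its normal form modulo $\mathcal{G}_\prec(I_\mathcal{A})$. The result is a $\Bbbk$-linear combination $\sum_k c_k \mathbf{Y}^{\mathbf v_k}\mathbf{Z}^{\mathbf u_k}$ of standard monomials that is congruent to $\mathbf{Y}^{\mathbf v}\mathbf{Z}^{\mathbf w}$ modulo $I_\mathcal{A}$. The crux of the argument is that each such standard monomial necessarily satisfies $\mathbf u_k \in \mathcal{Q}$: if $\mathbf{Z}^{\mathbf u_k}$ were in $\mathrm{in}_\prec(I_\mathcal{A})$, then by monomial divisibility $\mathbf{Y}^{\mathbf v_k}\mathbf{Z}^{\mathbf u_k}$ would be in $\mathrm{in}_\prec(I_\mathcal{A})$ too, contradicting standardness. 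Applying $\varphi_\mathcal{A}$ and using that $\varphi_\mathcal{A}(\mathbf{Y}^{\mathbf v_k}\mathbf{Z}^{\mathbf u_k}) = \mathbf{Y}^{\mathbf v_k}\cdot \varphi_\mathcal{A}(\mathbf{Z}^{\mathbf u_k})$ in the $\Bbbk[\mathbf Y]$-module structure, we express $\chi^{\mathbf a}$ as a $\Bbbk[\mathbf Y]$-linear combination of the proposed generators.

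For minimality, I would appeal to the graded Nakayama lemma: since $\Bbbk[S]$ is a finitely generated $S$-graded $\Bbbk[\mathbf Y]$-module, a homogeneous generating set is minimal if and only if its image in $\Bbbk[S]/\mathfrak{m}\,\Bbbk[S]$ is $\Bbbk$-linearly independent, where $\mathfrak{m} = \langle Y_1, \ldots, Y_r \rangle$. By Theorem \ref{Th2} the elements $\varphi_\mathcal{A}(\mathbf{Z}^{\mathbf u}),\ \mathbf u \in \mathcal{Q},$ have pairwise distinct $\mathcal{A}$-degrees (they enumerate $\mathrm{Ap}(S,E)$), so $\Bbbk$-linear independence reduces to showing each individual class is nonzero. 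An equality $\chi^{\mathbf q} = \sum_{i=1}^r \chi^{\mathbf a_i} h_i$ with $h_i \in \Bbbk[S]$, after extracting the $\mathbf q$-homogeneous component, would force $\mathbf q - \mathbf a_i \in S$ for some $i$, contradicting the definition of $\mathrm{Ap}(S,E)$.

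The main obstacle is really the spanning lemma: one must argue that reducing a mixed monomial modulo $\mathcal{G}_\prec(I_\mathcal{A})$ never produces standard monomials with $\mathbf{Z}$-part outside $\mathcal{Q}$. Once this divisibility remark is in place, both parts of the proposition follow cleanly from Theorem \ref{Th2} and graded Nakayama, with no additional computation needed.
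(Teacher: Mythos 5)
Your proof is correct and follows the same route the paper intends: the paper's own proof is the one-liner ``since $\Bbbk[S] \cong \Bbbk[\mathbf{Y},\mathbf{Z}]/I_\mathcal{A}$, the result follows from the definition of $\mathcal{Q}$,'' and your argument is exactly the honest expansion of that remark --- spanning because every standard monomial $\mathbf{Y}^{\mathbf v}\mathbf{Z}^{\mathbf u}$ must have $\mathbf u \in \mathcal{Q}$ (the divisibility observation), and minimality from graded Nakayama plus the fact that the generators have pairwise distinct degrees lying in $\mathrm{Ap}(S,E)$ by Theorem \ref{Th2}. No gaps; if anything, your write-up supplies the details the paper omits.
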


\begin{proof}
Since $\Bbbk[S] \cong \Bbbk[\mathbf{Y}, \mathbf{Z}] / I_\mathcal{A},$ the result follows from the definition of $\mathcal{Q}.$
\end{proof}

\begin{remark}\label{Remark6}
Observe that $S$ is a simplicial semigroup if and only if $I_{S_E} = I_S \cap \Bbbk[\mathbf{Y}] = 0$. In this case, $\varphi_E$ is an isomorphism. This condition on $S$ is implicitly assumed in \cite[Section 1]{pison03}.
\end{remark}

In order to give an $\mathcal{A}-$graded presentation of $\Bbbk[S]$ as $\Bbbk[\mathbf{Y}]-$module in the general setting. We first order $\mathcal{Q}$ lexicographically; so that there is a bijection $\sigma$ from $\{1, \ldots, \beta_0 := \#\mathcal{Q} \}$ to $\mathcal{Q}$. Now, we may define the following surjective $\Bbbk[\mathbf{Y}]-$module homomorphism
$$\psi_0 : \Bbbk[\mathbf{Y}]^{\beta_0} \longrightarrow \Bbbk[S]$$ with $\psi_0(\varepsilon_i) = \mathbf{Z}^{\sigma(i)},\ i = 1, \ldots, \beta_0,$ where $\{\varepsilon_1, \ldots, \varepsilon_{\beta_0}\}$ is the canonical basis of $\Bbbk[\mathbf{Y}]^{\beta_0}$.

Let $\mathbf{Y}^\mathbf{v} \mathbf{Z}^\mathbf{u} - \mathbf{Y}^{\mathbf{v}'} \mathbf{Z}^{\mathbf{u}'}$ be an element of $\mathcal{G}_\prec(I_\mathcal{A})$ whose leading term is $\mathbf{Y}^\mathbf{v} \mathbf{Z}^\mathbf{u}$ with $\mathbf{v} \neq 0$ and $\mathbf{u} \neq 0$. First of all, we notice that $\mathbf{v}' \neq 0$ which implies $\mathbf{Z}^\mathbf{u}$ and $\mathbf{Z}^{\mathbf{u}'} \in \mathcal{Q}$. Moreover, since no variable is zero divisor modulo $I_\mathcal{A}$ (because $I_\mathcal{A}$ is a toric ideal), we have that $\mathbf{u} \neq \mathbf{u}'$, and we conclude that $\mathbf{Y}^\mathbf{v} - \mathbf{Y}^{\mathbf{v}'} \in I_\mathcal{A}$, in contradiction with $\mathcal{G}_\prec(I_\mathcal{A})$ to be reduced. Now, for each $\mathbf{w} \in \mathbb{N}^s$ such that $\mathbf{Z}^{\mathbf{u} + \mathbf{w}} \in \mathcal{Q}$, consider the remainder, $\mathbf{Y}^{\mathbf{w}'} \mathbf{Z}^{\mathbf{u}''}$, of $\mathbf{Z}^{\mathbf{u}' + \mathbf{w}}$ on division by $\mathcal{G}_\prec(I_\mathcal{A})$ (which may be $\mathbf{Z}^{\mathbf{u}' + \mathbf{w}}$ itself) and define the element $\mathbf{f} \in \Bbbk[\mathbf{Y}]^{\beta_0}$ whose $\sigma^{-1}(\mathbf{u} + \mathbf{w})-$th and $\sigma^{-1}(\mathbf{u}'')-$th coordinates are $\mathbf{Y}^\mathbf{v}$ and $-\mathbf{Y}^{\mathbf{v}'+\mathbf{w}'}$, respectively, and zeroes elsewhere. Observe that $\psi_0(\mathbf{f}) = 0$

Let \begin{equation}\label{ecu2} \mathcal{M}' = \{\textbf{f}_1, \ldots, \textbf{f}_{\beta'_0}\} \subset \Bbbk[\mathbf{Y}]^{\beta_0}\end{equation} be the set of elements of $\Bbbk[\mathbf{Y}]^{\beta_0}$ defined as above and let $M'$ be the $\beta_0 \times \beta'_0-$matrix whose columns are $\textbf{f}_1, \ldots, \textbf{f}_{\beta'_0}.$

If $I_{S_E} \neq 0,$ then there exists  $\mathbf{Y}^\mathbf{v} \mathbf{Z}^\mathbf{u} - \mathbf{Y}^{\mathbf{v}'} \mathbf{Z}^{\mathbf{u}'} \in \mathcal{G}_\prec(I_\mathcal{A})$ whose leading term is $\mathbf{Y}^\mathbf{v} \mathbf{Z}^\mathbf{u}$ with $\mathbf{v} \neq 0$ and $\mathbf{u} = 0$; in particular, $\mathbf{u}' = 0$; otherwise the leading term would be
$\mathbf{Y}^{\mathbf{v}'} \mathbf{Z}^{\mathbf{u}'}$. Let $\{g_1, \ldots, g_t\} \subset \Bbbk[\mathbf{Y}]$ be a (minimal) system of binomial generators of $I_{S_E}$ and define the $\Bbbk[\mathbf{Y}]-$module generated by the columns of the matrix $$N = \mathbf{1}_{\beta_0} \otimes (g_1\ \ldots\ g_t),$$ where $\mathbf{1}_{\beta_0}$ denotes the identity matrix of size $\beta_0$ and $\otimes$ denotes the Kronecker product of matrices.

Set $\beta_1 = \beta_1' + t \cdot \beta_0$. Clearly, $M := (M' \vert N)$ defines a homomorphism of free $\Bbbk[\mathbf{Y}]-$modules $\psi_1 :\Bbbk[\mathbf{Y}]^{\beta_1} \to \Bbbk[\mathbf{Y}]^{\beta_0}$ such that $\mathrm{im}(\psi_1) \subseteq \ker(\psi_0)$. If $I_{S_E} = 0,$ we take $t=0$ and $M = M'$ (this is the case in \cite[Section 1]{pison03}).

\begin{theorem}\label{Th1}
With the notation above, $\mathrm{im}(\psi_1) = \ker(\psi_0)$. In particular, $\mathrm{coker}(\psi_1) \cong_{\Bbbk[\mathbf{Y}]} \Bbbk[S]$, that is to say, $\psi_1$ is a presentation of $\Bbbk[S]$ as $\Bbbk[\mathbf{Y}]-$module.
\end{theorem}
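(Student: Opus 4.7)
The inclusion $\mathrm{im}(\psi_1)\subseteq \ker(\psi_0)$ is essentially built into the construction: the paper already observes $\psi_0(\mathbf{f})=0$ for every column $\mathbf{f}$ of $M'$, and for a column $g_k\,\varepsilon_i$ of $N$ one has $\psi_0(g_k\varepsilon_i)=g_k\,\mathbf{Z}^{\sigma(i)}\bmod I_\mathcal{A}$, which vanishes because $g_k\in I_{S_E}\subseteq I_\mathcal{A}$. The substantive task is therefore $\ker(\psi_0)\subseteq\mathrm{im}(\psi_1)$, which I would prove by Noetherian induction on the $\prec$-leading monomial of
\[
F(\mathbf{h}):=\sum_{i=1}^{\beta_0} h_i\,\mathbf{Z}^{\sigma(i)}\in \Bbbk[\mathbf{Y},\mathbf{Z}],
\]
attached to an arbitrary $\mathbf{h}=(h_1,\ldots,h_{\beta_0})\in\ker(\psi_0)$. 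The condition $\psi_0(\mathbf{h})=0$ is exactly $F(\mathbf{h})\in I_\mathcal{A}$, and $F(\mathbf{h})=0$ forces $\mathbf{h}=0$ because the $\mathbf{Z}^{\sigma(i)}$ are pairwise distinct pure $\mathbf{Z}$-monomials; this settles the base case.

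For the inductive step, let $\mathbf{Y}^{\mathbf{v}_0}\mathbf{Z}^{\sigma(i_0)}$ be the leading monomial of $F(\mathbf{h})$ with coefficient $c\in\Bbbk^{*}$. Since $F(\mathbf{h})\in I_\mathcal{A}$, some $g=\mathbf{Y}^\mathbf{v}\mathbf{Z}^\mathbf{u}-\mathbf{Y}^{\mathbf{v}'}\mathbf{Z}^{\mathbf{u}'}\in\mathcal{G}_\prec(I_\mathcal{A})$ has leading term $\mathbf{Y}^\mathbf{v}\mathbf{Z}^\mathbf{u}$ dividing $\mathbf{Y}^{\mathbf{v}_0}\mathbf{Z}^{\sigma(i_0)}$, and I would split into three cases. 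The case $\mathbf{v}=0,\mathbf{u}\neq 0$ is impossible: it would give $\mathbf{Z}^\mathbf{u}\in\mathrm{in}_\prec(I_\mathcal{A})$ with $\mathbf{u}\leq\sigma(i_0)$, but $\mathcal{Q}$ is closed under componentwise $\leq$ (the complement of any monomial ideal is), so $\sigma(i_0)\in\mathcal{Q}$ would force $\mathbf{u}\in\mathcal{Q}$, contradicting $\mathbf{u}\notin\mathcal{Q}$. If $\mathbf{u}=0$, then $g\in I_\mathcal{A}\cap\Bbbk[\mathbf{Y}]=I_{S_E}$, hence $c\mathbf{Y}^{\mathbf{v}_0-\mathbf{v}}g=\sum_k p_k g_k$ for some $p_k\in\Bbbk[\mathbf{Y}]$; replacing $\mathbf{h}$ by $\mathbf{h}':=\mathbf{h}-\sum_k p_k(g_k\varepsilon_{i_0})$ differs from $\mathbf{h}$ by a column of $N$ and modifies only the $i_0$-th coordinate, whose leading monomial is cancelled and replaced by $c\mathbf{Y}^{\mathbf{v}_0-\mathbf{v}+\mathbf{v}'}\prec \mathbf{Y}^{\mathbf{v}_0}$.

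The remaining case $\mathbf{u}\neq 0,\mathbf{v}\neq 0$ is precisely the situation used to manufacture a column of $M'$: with $\mathbf{w}=\sigma(i_0)-\mathbf{u}\geq 0$ and $\mathbf{Y}^{\mathbf{w}'}\mathbf{Z}^{\mathbf{u}''}$ the remainder of $\mathbf{Z}^{\mathbf{u}'+\mathbf{w}}$ modulo $\mathcal{G}_\prec(I_\mathcal{A})$, subtracting $c\mathbf{Y}^{\mathbf{v}_0-\mathbf{v}}\mathbf{f}$ from $\mathbf{h}$ (for $\mathbf{f}$ the associated column of $M'$) cancels the leading term of $F(\mathbf{h})$ and introduces only the monomial $c\mathbf{Y}^{\mathbf{v}_0-\mathbf{v}+\mathbf{v}'+\mathbf{w}'}\mathbf{Z}^{\mathbf{u}''}$ in the $\sigma^{-1}(\mathbf{u}'')$-th position. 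The strict drop $\mathbf{Y}^{\mathbf{v}_0-\mathbf{v}+\mathbf{v}'+\mathbf{w}'}\mathbf{Z}^{\mathbf{u}''}\prec \mathbf{Y}^{\mathbf{v}_0}\mathbf{Z}^{\sigma(i_0)}$ follows from chaining the remainder inequality $\mathbf{Y}^{\mathbf{w}'}\mathbf{Z}^{\mathbf{u}''}\preceq\mathbf{Z}^{\mathbf{u}'+\mathbf{w}}$ with the leading-term inequality $\mathbf{Y}^{\mathbf{v}'}\mathbf{Z}^{\mathbf{u}'+\mathbf{w}}\prec\mathbf{Y}^\mathbf{v}\mathbf{Z}^{\mathbf{u}+\mathbf{w}}$ coming from $g$, and then multiplying by $\mathbf{Y}^{\mathbf{v}_0-\mathbf{v}}$. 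In every admissible case the new $\mathbf{h}'$ still lies in $\ker(\psi_0)$, differs from $\mathbf{h}$ by an element of $\mathrm{im}(\psi_1)$, and satisfies $\mathrm{in}_\prec F(\mathbf{h}')\prec \mathrm{in}_\prec F(\mathbf{h})$; since $\prec$ is a well-order, the induction terminates at $F=0$, so $\mathbf{h}\in\mathrm{im}(\psi_1)$. The principal technical obstacle is precisely this last case, where one has to certify that the auxiliary term produced by Gr\"obner reduction both stays in the $\Bbbk[\mathbf{Y}]$-span of $\{\mathbf{Z}^{\sigma(i)}\}_{i}$ and strictly lowers the leading monomial; the built-in choice of $\mathbf{Y}^{\mathbf{w}'}\mathbf{Z}^{\mathbf{u}''}$ as a remainder modulo $\mathcal{G}_\prec(I_\mathcal{A})$ ensures the first, while the reduction property of the Gr\"obner basis ensures the second.
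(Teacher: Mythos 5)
Your proof is correct and takes essentially the same route as the paper's: descending induction on the $\prec$-leading monomial of $F(\mathbf{h})$, cancelling it with a column of $M'$ or of $N$ according to whether the Gr\"obner basis element whose leading term divides $\mathrm{in}_\prec F(\mathbf{h})$ involves the $\mathbf{Z}$-variables. You are in fact more careful than the paper about the scalar and monomial multipliers, the exclusion of the case $\mathbf{v}=0$, and the strict decrease of the leading term; the only point left implicit (that a Gr\"obner basis element with leading term in $\Bbbk[\mathbf{Y}]$ lies entirely in $\Bbbk[\mathbf{Y}]$) follows from the revlex tie-break with $Y_i \prec Z_j$ and is already recorded in the paper when $N$ is constructed.
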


\begin{proof}
By construction, it suffices to prove that $\mathrm{im}(\psi_1) \supseteq \ker(\psi_0)$.

Let $f_1, \ldots, f_{\beta_0} \in \Bbbk[\mathbf{Y}]$ be such that $\mathbf{f} = (f_1, \ldots, f_{\beta_0})^\top \in \mathrm{ker}(\psi_0)$, where $\top$ denotes transpose. By hypothesis, $f = \sum_{i=1}^{\beta_0} f_i \mathbf{Z}^{\sigma(i)} \in I_\mathcal{A}$. Without loss of generality, we may suppose that $f_i \mathbf{Z}^{\sigma(i)}$ is homogeneous of $\mathcal{A}-$degree $\mathbf{a}$, for every $i = 1, \ldots, \beta_0$.

If $f \neq 0,$ then its leading term is $\mathbf{Y}^{\mathbf{v}'} \mathbf{Z}^{\mathbf{u}'}$ with $\mathbf{v}' \neq 0$ and $\mathbf{u}' = \sigma(i)$ for some $i.$ Let $g \in \mathcal{G}_\prec(I_\mathcal{A})$ be an element whose leading term, $\mathbf{Y}^\mathbf{v} \mathbf{Z}^\mathbf{u}$, divides
$\mathbf{Y}^{\mathbf{v}'} \mathbf{Z}^{\mathbf{u}'}$.

If $\mathbf{u} \neq 0,$ let $\mathbf{w} = \mathbf{u}' - \mathbf{u} $ and consider the element $\mathbf{f}_j \in \mathcal{M}$ corresponding to $g$ and $\mathbf{w}$. In this case, we obtain that  $\mathbf{f} - \psi_1(\varepsilon_j) := (f'_1, \ldots, f'_{\beta_0})^\top \in \ker(\psi_0)$, where $\varepsilon_j$ is the $j-$th vector of the canonical basis of $\Bbbk[\mathbf{Y}]^{\beta_1}$, and the leading term of $f' = \sum_{j=1}^{\beta_0} f'_j \mathbf{Z}^{\sigma(i)}$ is lesser than the leading term of $f$.

On the other hand, if $\mathbf{u} = 0,$ then $g = \mathbf{Y}^\mathbf{v} - \mathbf{Y}^{\mathbf{v}''} \in I_{S_E}$. Therefore, $g = \sum_{j=1}^t h_j g_j$. Let $H$ be the $t \times \beta_0-$matrix whose $i-$th column is $(h_1 \ldots h_t)^\top$ and define $\mathbf{h}_i = \binom{\mathbf{0}}{\mathrm{vec}(H)} \in \Bbbk[\mathbf{Y}]^{\beta_1},$ where $\mathrm{vec}(-)$ denotes the vectorization operator and $\mathbf{0}$ is a vector of zeroes. Clearly, $\mathbf{f} - \psi_1(\mathbf{h}_i) = (f'_1, \ldots, f'_{\beta_0})^\top \in \ker(\psi_1)$ and the leading term of $f' = \sum_{j=1}^{\beta_0} f'_j \mathbf{Z}^{\sigma(i)}$ is lesser than the leading term of $f$.

Now, we repeat the same process to $(f'_1, \ldots, f'_{\beta_0})^\top$, and so on. Since in each step the leading term of the corresponding polynomial in $\Bbbk[\mathbf{Y}, \mathbf{Z}]$ decreases, this process must terminate.
\end{proof}

Recall that an $\mathcal{A}-$graded free resolution of $\mathrm{coker}(\varphi_1)$ as $\Bbbk[\mathbf{Y}]-$module is an acyclic complex of length $t \leq r$ $$\mathcal{P} : \Bbbk[\mathbf{Y}]^{\beta_t} \stackrel{\psi_t}{\longrightarrow} \ldots \longrightarrow \Bbbk[\mathbf{Y}]^{\beta_1} \stackrel{\psi_1}{\longrightarrow} \Bbbk[\mathbf{Y}]^{\beta_0} \longrightarrow \mathrm{coker}(\psi_1), $$ where the maps are all homogeneous of $\mathcal{A}-$degree $0$. Since, by Theorem \ref{Th1}, $\mathrm{coker}(\psi_1) \cong_{\Bbbk[\mathbf{Y}]} \Bbbk[S]$, we call $\mathcal{P}$ a \textbf{Pis\'on's free resolution} of $\Bbbk[S].$ Notice that both the isomorphism and $\psi_1$ are given explicitly, so this resolution can be effectively computed.

\begin{corollary}
With the notation above, if the subgroup of $\mathbb{Z}^d$ generated by $B,\ \mathbb Z B$, is contained in $S_E \cup (-S_E)$, then $i-$th map in the Pison's free resolution of $\Bbbk[S]$ can be taken to be the direct sum of $\#\mathcal{Q}$ copies of the $i-$th map in a minimal free resolution of $\Bbbk[S_E]$, for every $i > 0$.
\end{corollary}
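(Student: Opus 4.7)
The plan is to verify that, under the hypothesis $\mathbb{Z}B\subseteq S_E\cup(-S_E)$, the ``mixed'' relations that populate the columns of $\mathcal{M}'$ simply cannot occur, so the matrix $M'$ is empty and the presentation $\psi_1$ of Theorem \ref{Th1} reduces to $N=\mathbf{1}_{\beta_0}\otimes(g_1,\ldots,g_t)$; once this is in hand, the direct-sum structure propagates along the whole resolution.

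First I would observe the combinatorial consequence that, under the hypothesis, two Ap\'ery elements lying in the same coset of $S_E$ must coincide. If $\mathbf{q}_1,\mathbf{q}_2\in\mathrm{Ap}(S,E)$ and $\mathbf{q}_1-\mathbf{q}_2=\sum c_i\mathbf{a}_i\in S_E$ with $c_i\in\mathbb{N}$ not all zero, then choosing any $i$ with $c_i>0$ yields $\mathbf{q}_1-\mathbf{a}_i\in S$, contradicting $\mathbf{q}_1\in\mathrm{Ap}(S,E)$; hence $\mathbf{q}_1=\mathbf{q}_2$ (and the symmetric case $\mathbf{q}_2-\mathbf{q}_1\in S_E$ is analogous).

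Next I would apply this to rule out columns of $M'$. Suppose, towards a contradiction, that $\mathbf{Y}^{\mathbf{v}}\mathbf{Z}^{\mathbf{u}}-\mathbf{Y}^{\mathbf{v}'}\mathbf{Z}^{\mathbf{u}'}\in\mathcal{G}_\prec(I_\mathcal{A})$ has leading term $\mathbf{Y}^{\mathbf{v}}\mathbf{Z}^{\mathbf{u}}$ with $\mathbf{v}\neq 0$ and $\mathbf{u}\neq 0$. As recorded in the paragraph preceding \eqref{ecu2}, one then has $\mathbf{v}'\neq 0$, both $\mathbf{Z}^{\mathbf{u}}$ and $\mathbf{Z}^{\mathbf{u}'}$ lie in $\mathcal{Q}$, and $\mathbf{u}\neq\mathbf{u}'$. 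Setting $\mathbf{q}=\deg_\mathcal{A}(\iota(\mathbf{u}))$ and $\mathbf{q}'=\deg_\mathcal{A}(\iota(\mathbf{u}'))$ and matching $\mathcal{A}$-degrees in the binomial gives
\[
\mathbf{q}-\mathbf{q}'=\sum_{j=1}^{s}(u_j-u'_j)\mathbf{b}_j\in\mathbb{Z}B\subseteq S_E\cup(-S_E).
\]
By the previous paragraph, $\mathbf{q}=\mathbf{q}'$, and Theorem \ref{Th2} then forces $\mathbf{u}=\mathbf{u}'$, contradicting what was just noted. Hence $\mathcal{M}'=\emptyset$, $\beta'_0=0$, and $\psi_1=N=\mathbf{1}_{\beta_0}\otimes(g_1,\ldots,g_t)$, which is manifestly the direct sum of $\beta_0=\#\mathcal{Q}$ copies of the first differential in the minimal $\Bbbk[\mathbf{Y}]$-free resolution of $\Bbbk[S_E]$.

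For $i>1$ the statement follows by a straightforward induction: since $\psi_1$ is block-diagonal, $\ker(\psi_1)$ splits as the direct sum of $\#\mathcal{Q}$ copies (with the appropriate $\mathcal{A}$-graded shifts $-\mathbf{q}$) of the first syzygy module of $\Bbbk[S_E]$, so the next differential in the Pis\'on resolution can be chosen as the direct sum of $\#\mathcal{Q}$ copies of the corresponding differential in a minimal free resolution of $\Bbbk[S_E]$; continuing inductively yields the claim for every $i>0$. The only mild subtlety I anticipate is the bookkeeping of the grading twists $-\mathbf{q}$, but these merely shift the summands and do not alter the shape of any matrix representing a differential.
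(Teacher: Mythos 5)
Your proof is correct and follows essentially the same strategy as the paper's: both arguments reduce the corollary to showing that $\mathcal{M}'$ is empty, by taking a putative mixed element $\mathbf{Y}^{\mathbf{v}}\mathbf{Z}^{\mathbf{u}}-\mathbf{Y}^{\mathbf{v}'}\mathbf{Z}^{\mathbf{u}'}$ of $\mathcal{G}_\prec(I_\mathcal{A})$ and using the hypothesis $\mathbb{Z}B\subseteq S_E\cup(-S_E)$ on $\sum_j(u_j-u'_j)\mathbf{b}_j$ to force a contradiction. The only difference is cosmetic and lies in the last step: the paper produces an explicit binomial $\mathbf{Z}^{\mathbf{u}}-\mathbf{Y}^{\mathbf{w}}\mathbf{Z}^{\mathbf{u}'}$ (or its mirror) in $I_\mathcal{A}$ and contradicts the reducedness of $\mathcal{G}_\prec(I_\mathcal{A})$, whereas you pass to $\mathrm{Ap}(S,E)$ and invoke the bijectivity of Theorem \ref{Th2}; both are sound, and your explicit treatment of the induction for $i>1$ fills in a step the paper leaves implicit.
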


\begin{proof}
We claim that the set $\mathcal{M}'$ defined in (\ref{ecu2}) is empty. Otherwise, there exists $\mathbf{Y}^\mathbf{v} \mathbf{Z}^\mathbf{u} - \mathbf{Y}^{\mathbf{v}'} \mathbf{Z}^{\mathbf{u}'} \in \mathcal{G}_\prec(I_\mathcal{A})$ whose leading term is $\mathbf{Y}^\mathbf{v} \mathbf{Z}^\mathbf{u}$ with $\mathbf{v} \neq 0$ and $\mathbf{u} \neq 0$. Since $\sum_{i=1}^s (u_i - u'_i) \mathbf{b}_i \in \mathbb Z B$, by hypothesis, there exist $\mathbf{w}_i \in \mathbb{N},\ i = 1, \ldots, r$, such that $\sum_{i=1}^s (u_i - u'_i) \mathbf{b}_i = \pm \sum_{i=1}^r w_i \mathbf{e}_i$. Therefore, either $\sum_{i=1}^s u_i \mathbf{b}_i + \sum_{i=1}^r w_i \mathbf{e}_i = \sum_{i=1}^s u'_i  \mathbf{b}_i$ or  $\sum_{i=1}^s u_i \mathbf{b}_i  = \sum_{i=1}^s u'_i  \mathbf{b}_i + \sum_{i=1}^r w_i \mathbf{e}_i$, that is to say, either $\mathbf{Z}^{\mathbf{u}'} - \mathbf{Y}^\mathbf{w} \mathbf{Z}^{\mathbf{u}} \in I_\mathcal{A}$ or  $\mathbf{Z}^{\mathbf{u}} - \mathbf{Y}^\mathbf{w} \mathbf{Z}^{\mathbf{u}'} \in I_\mathcal{A}$, in contradiction with the reducedness of $\mathcal{G}_\prec(I_\mathcal{A})$.
\end{proof}

\begin{example}
Let $$A = \left(\begin{array}{ccccccc}
3 & 1 & 1 & 1 & 2 & 4 & 1\\
1 & 3 & 1 & 1 & 0 & 0 & 1\\
1 & 1 & 3 & 1 & 4 & 2 & 2\\
1 & 1 & 1 & 3 & 0 & 0 & 2
\end{array}\right)$$ and consider the subsemigroup $S$ of $\mathbb{N}^4$ generated by the columns, $\mathbf{a}_1, \ldots, \mathbf{a}_6$ and $\mathbf{b}$, of $A$. Set $\mathcal{A} = \{\mathbf{a}_1, \ldots, \mathbf{a}_6, \mathbf{b}\}$ and $E = \{\mathbf{a}_1, \ldots, \mathbf{a}_6\}$, clearly $\mathrm{pos}(\mathcal{A}) = \mathrm{pos}(E)$ and $\mathbb Z S \subset S_E \cup (-S_E)$. The ideal  $I_\mathcal{A} \subseteq \Bbbk[Y_1, \ldots, Y_6, Z]$ is equal to $\langle Z^2 - Y_3 Y_4 \rangle + I_{S_E}$. Therefore, $\mathcal{Q} = \{0,1\}$, $$M = \left(\begin{array}{cccccccc} g_1 & g_2 & g_3 & g_4 & 0 & 0 & 0 & 0 \\ 0 & 0 & 0 & 0 & g_1 & g_2 & g_3 & g_4 \end{array}\right),$$ where $\{g_1 = Y_1Y_5-Y_3Y_6, g_2 = Y_1Y_3^3-Y_2Y_4Y_5^2, g_3 = Y_1^2Y_3^2-Y_2Y_4Y_5Y_6, g_4 = Y_1^3Y_3-Y_2Y_4Y_6^2\}$ is a minimal system of generators of $I_{S_E}$, and we conclude that the Pis\'on's free resolution of $\Bbbk[S]$ is
$$0 \to \Bbbk[\mathbf{Y}]^2 \stackrel{\phi_3 \oplus \phi_3}{\longrightarrow} \Bbbk[\mathbf{Y}]^8 \stackrel{\phi_2 \oplus \phi_2}{\longrightarrow} \Bbbk[\mathbf{Y}]^8 \stackrel{\phi_1 \oplus \phi_1}{\longrightarrow} \Bbbk[\mathbf{Y}]^2 \stackrel{\psi_0}{\longrightarrow} \Bbbk[S],$$ where $$0 \to \Bbbk[\mathbf{Y}] \stackrel{\phi_3}{\longrightarrow} \Bbbk[\mathbf{Y}]^4 \stackrel{\phi_2}{\longrightarrow} \Bbbk[\mathbf{Y}]^4 \stackrel{\phi_1}{\longrightarrow} \Bbbk[\mathbf{Y}] \stackrel{\varphi_E}{\longrightarrow} \Bbbk[S_E]$$ is a minimal free resolution of $\Bbbk[S_E]$.
\end{example}

\begin{lemma}\label{BH1.2.26}
With the notation above, $\mathrm{depth}_{\Bbbk[\mathbf Y, \mathbf Z]}(\Bbbk[S]) = \mathrm{depth}_{\Bbbk[\mathbf Y]}(\Bbbk[S])$
\end{lemma}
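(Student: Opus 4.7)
The plan is to decompose the claimed equality through the intermediate ideal $\mathfrak{m}_Y\Bbbk[\mathbf{Y},\mathbf{Z}]$, where $\mathfrak{m}_Y := (Y_1,\ldots,Y_r)$, and to prove separately
$$
\mathrm{depth}_{\Bbbk[\mathbf{Y}]}(\Bbbk[S]) = \mathrm{depth}_{\mathfrak{m}_Y\Bbbk[\mathbf{Y},\mathbf{Z}]}(\Bbbk[S]) = \mathrm{depth}_{\Bbbk[\mathbf{Y},\mathbf{Z}]}(\Bbbk[S]).
$$

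For the first equality I would invoke the standard change-of-rings principle for depth (Bruns--Herzog, Proposition~1.2.26), which requires $\Bbbk[S]$ to be a finitely generated $\Bbbk[\mathbf{Y}]$-module; this is precisely the content of the proposition preceding Theorem~\ref{Th1}, since $\{\mathbf{Z}^{\mathbf{u}} : \mathbf{u} \in \mathcal{Q}\}$ is a generating set and $\mathcal{Q}$ is finite. One may alternatively see this directly via the Koszul-homology characterization $\mathrm{depth}_{(Y_1,\ldots,Y_r)}(N) = r - \sup\{i : H_i(K(Y_1,\ldots,Y_r;N))\neq 0\}$, whose right-hand side depends only on the $Y_i$-action on $N$ and is therefore unchanged with $N = \Bbbk[S]$ regarded over $\Bbbk[\mathbf{Y}]$ or over $\Bbbk[\mathbf{Y},\mathbf{Z}]$.

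For the second equality I would use that the depth of a finitely generated module on an ideal depends only on the radical of that ideal modulo the annihilator. Thus it suffices to verify $\sqrt{\mathfrak{m}_Y\Bbbk[\mathbf{Y},\mathbf{Z}]+I_\mathcal{A}} = \mathfrak{m}$, where $\mathfrak{m}=(Y_1,\ldots,Y_r,Z_1,\ldots,Z_s)$. The nontrivial inclusion reduces to showing that each $Z_j$ is nilpotent modulo $\mathfrak{m}_Y\Bbbk[\mathbf{Y},\mathbf{Z}]+I_\mathcal{A}$: since $\mathbf{b}_j \in \mathrm{pos}(E)$, there exist $u_j \geq 1$ and nonnegative integers $v_{ij}$ with $u_j\mathbf{b}_j = \sum_i v_{ij}\mathbf{a}_i$, so $Z_j^{u_j} - \prod_i Y_i^{v_{ij}} \in I_\mathcal{A}$, whence $Z_j^{u_j} \in \mathfrak{m}_Y\Bbbk[\mathbf{Y},\mathbf{Z}]+I_\mathcal{A}$ follows. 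This is essentially the same observation that made $\mathcal{Q}$ finite in Section~\ref{Sect3}.

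The only mildly delicate point is confirming that the change-of-rings statement applies in the multigraded (rather than strictly local) setting, but this is routine because the homogeneous maximal ideal plays the role of the maximal ideal for finitely generated graded modules; I expect no genuine obstacle beyond the two computations above.
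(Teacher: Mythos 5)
Your proof is correct, but it takes a genuinely different route from the paper's. The paper disposes of the lemma in one line by applying the change-of-rings statement \cite[Exercise 1.2.26(b)]{BH} to the (graded-)local homomorphism relating $\Bbbk[\mathbf Y]$ and $\Bbbk[\mathbf Y,\mathbf Z]$, using that $\Bbbk[S]$ is a finite module over the smaller ring; that exercise compares the depths with respect to the two graded maximal ideals directly, so nothing further is checked. You instead interpolate the ideal $\mathfrak m_Y\Bbbk[\mathbf Y,\mathbf Z]$ and prove the two resulting equalities by hand: the first by Koszul depth-sensitivity (this, rather than the citation of BH~1.2.26, is the correct justification of that step, since BH~1.2.26(b) compares maximal-ideal depths and is therefore the whole lemma rather than your first equality), and the second by radical-invariance of grade together with the explicit verification that $\sqrt{\mathfrak m_Y\Bbbk[\mathbf Y,\mathbf Z]+I_{\mathcal A}}$ is the full graded maximal ideal. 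In effect you have unwound the proof of the cited exercise in this concrete situation, which has the virtue of displaying exactly where the hypothesis $\mathrm{pos}(\mathcal A)=\mathrm{pos}(E)$ (equivalently, the finiteness of $\mathcal Q$) enters; the paper's version is shorter but hides this. Two points worth making explicit if you write this up: in the radical computation the monomial $\prod_i Y_i^{v_{ij}}$ must be nonconstant, i.e.\ not all $v_{ij}$ vanish, which holds because $\mathbf b_j\neq 0$ by condition (\ref{rc_ecu1}); and the Koszul characterization of grade requires $\mathfrak m_Y\Bbbk[S]\neq\Bbbk[S]$, which is clear because the quotient is spanned by the classes of the $\mathbf Z^{\mathbf u}$ with $\mathbf u\in\mathcal Q$ and in particular contains $1$. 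Your closing caveat about the graded-versus-local dictionary is exactly the point the paper also waves at, and it is indeed routine here.
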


\begin{proof}
Since we are assuming that $S \cap (-S) = 0$, both $\Bbbk[\mathbf Y, \mathbf Z]$ and $\Bbbk[\mathbf Y]$ can be regarded as local rings with maximal ideals $\langle Y_1, \ldots, Y_r, Z_1, \ldots, Z_s \rangle$ and $\langle Y_1, \ldots, Y_r \rangle$, respectively, because of the graduation given by the semigroup $S$. Clearing, the natural projection $\Bbbk[\mathbf Y, \mathbf Z] \to \Bbbk[\mathbf Y]$ is an homomorphism of local rings. So, our claim follows from \cite[Exercise 1.2.26(b)]{BH}.
\end{proof}

\begin{corollary}\label{Cor CarCM+S}
With the notation above, if $S$ is a simplicial semigroup, then $\Bbbk[S]$ is Cohen-Macaulay if and only if the generators of $\mathrm{in}_\prec(I_S)$ do not depend on $Y_1, \ldots, Y_r$.
\end{corollary}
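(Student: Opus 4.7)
The plan is to translate Cohen-Macaulayness into $\Bbbk[\mathbf{Y}]$-freeness of $\Bbbk[S]$ and then read the latter off the shape of the leading terms.

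Since $S$ is simplicial, Remark \ref{Remark6} gives $I_{S_E} = 0$, so $\Bbbk[\mathbf{Y}]\cong \Bbbk[S_E]\hookrightarrow \Bbbk[S]$ and $\dim \Bbbk[S] = r = \dim \Bbbk[\mathbf{Y}]$. By Lemma \ref{BH1.2.26}, $\mathrm{depth}_{\Bbbk[\mathbf{Y},\mathbf{Z}]}(\Bbbk[S]) = \mathrm{depth}_{\Bbbk[\mathbf{Y}]}(\Bbbk[S])$, so $\Bbbk[S]$ is Cohen-Macaulay if and only if $\mathrm{depth}_{\Bbbk[\mathbf{Y}]}(\Bbbk[S]) = r$. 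Applying the Auslander--Buchsbaum formula in the graded regular local ring $\Bbbk[\mathbf{Y}]$ (with $\mathrm{depth}(\Bbbk[\mathbf{Y}]) = r$), this becomes $\mathrm{pd}_{\Bbbk[\mathbf{Y}]}(\Bbbk[S]) = 0$, i.e. $\Bbbk[S]$ is a free $\Bbbk[\mathbf{Y}]$-module.

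Next I would invoke Theorem \ref{Th1}: since $I_{S_E} = 0$ the block $N$ is absent and $\ker(\psi_0) = \mathrm{im}(\psi_1)$ is the $\Bbbk[\mathbf{Y}]$-submodule of $\Bbbk[\mathbf{Y}]^{\beta_0}$ generated by $\mathcal{M}'$. Because $\{\mathbf{Z}^{\mathbf{u}}\mid \mathbf{u}\in\mathcal{Q}\}$ is a \emph{minimal} $\mathcal{A}$-graded generating system of $\Bbbk[S]$, $\Bbbk[S]$ is $\Bbbk[\mathbf{Y}]$-free precisely when $\psi_0$ is an isomorphism, i.e. when $\ker(\psi_0) = 0$, i.e. when $\mathcal{M}'=\emptyset$. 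By construction of $\mathcal{M}'$, emptiness is equivalent to the absence in $\mathcal{G}_{\prec}(I_S)$ of any binomial whose leading term has the mixed shape $\mathbf{Y}^{\mathbf{v}}\mathbf{Z}^{\mathbf{u}}$ with $\mathbf{v}\neq 0$ and $\mathbf{u}\neq 0$; indeed, taking $\mathbf{w}=0$ in the construction and using that a reduced Gr\"obner basis has $\mathbf{u},\mathbf{u}'\in \mathcal{Q}$ with $\mathbf{u}\neq \mathbf{u}'$, each such binomial contributes a nonzero element of $\mathcal{M}'$.

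Finally, $I_{S_E}=0$ also forbids any leading term of $\mathcal{G}_{\prec}(I_S)$ from being a pure monomial in $\mathbf{Y}$. Combining both prohibitions, the minimal generators of $\mathrm{in}_{\prec}(I_S)$ (which are exactly the leading terms of $\mathcal{G}_{\prec}(I_S)$) must all be monomials in $Z_1,\dots,Z_s$ alone, which is the asserted condition. The main obstacle is making precise the equivalence ``$\mathcal{M}' = \emptyset \Longleftrightarrow \Bbbk[S]$ is $\Bbbk[\mathbf{Y}]$-free''; beyond Theorem \ref{Th1} this relies on the fact that, in the multigraded setting, any surjection of free $\Bbbk[\mathbf{Y}]$-modules of matching multigraded ranks forced by the minimality of $\mathcal{Q}$ must be an isomorphism, so the existence of one nonzero element of $\mathcal{M}'$ already obstructs freeness.
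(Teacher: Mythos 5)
Your proposal is correct and follows essentially the same route as the paper: transfer depth via Lemma \ref{BH1.2.26}, apply Auslander--Buchsbaum to reduce Cohen--Macaulayness to $\mathrm{pd}_{\Bbbk[\mathbf{Y}]}(\Bbbk[S])=0$, and read freeness off the presentation of Theorem \ref{Th1}. You merely spell out more explicitly the final equivalence (``$\psi_0$ is an isomorphism iff $\mathcal{M}'=\emptyset$ iff no leading term of $\mathcal{G}_\prec(I_S)$ involves a $Y_i$'') that the paper compresses into ``as it is deduced from our construction.''
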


\begin{proof}
Since $S$ is simplicial, we may assume that $\dim(\mathrm{pos}(\mathcal{A})) = r$ (see Observation \ref{Obs Simplicial}); so, the Krull dimension of $\Bbbk[S]$ equals $r$ (see the proof of \cite[Proposition 7.5]{MS}). Therefore, $\Bbbk[S]$ is Cohen-Maculay if and only if $\mathrm{depth}_{\Bbbk[\mathbf Y, \mathbf Z]}(\Bbbk[S]) = r$. Now, since $\mathrm{depth}_{\Bbbk[\mathbf Y, \mathbf Z]}(\Bbbk[S]) = \mathrm{depth}_{\Bbbk[\mathbf Y]}(\Bbbk[S])$ by Lemma \ref{BH1.2.26} and $\Bbbk[\mathbf Y] = \Bbbk[S_E]$ because $I_{S_E} = 0$ (see Remark \ref{Remark6}), from the Auslander-Buchbaum formula it follows that $\Bbbk[S]$ is Cohen-Maculay if and only if the projective dimension of $\Bbbk[S]$ as $\Bbbk[\mathbf Y]-$module is $0$. Equivalently, $$\psi_0 : \Bbbk[\mathbf Y]^{\# \mathcal{Q}} \cong_{\Bbbk[\mathbf Y]}\Bbbk[S]$$ which means that $\mathrm{in}_\prec(I_S)$ is minimally generated in $\Bbbk[\mathbf{Z}]$, as it is deduced from our construction.
\end{proof}

\begin{example}
Let $$\mathcal{A} = \big\{(6,1),(6,2),(6,3),(7,2),(7,3),(8,2),(8,3),(9,3), (10,3) \big\} \subset \mathbb{Z}^2$$ and $\Bbbk[\mathbf Y, \mathbf Z] = \Bbbk[Y_1, Z_1, Y_2, Z_2, \ldots, Z_7 ]$. Let $\prec$ be the $\mathcal{A}-$graded reverse lexicographical term ordering on $\Bbbk[\mathbf Y, \mathbf Z]$ such that $Y_1 \prec Y_2 \prec Z_1 \prec \ldots \prec Z_7$. The computation of the minimal system of generators of $\mathrm{in}_\prec(I_\mathcal{A})$ can be done with Singular \cite{DGPS}:

\begin{verbatim}
     LIB "toric.lib";
     option(redSB);
     intmat A[2][9] = 6,6,6,7,7,8,8,9,10,
                      1,3,2,2,3,2,3,3,3;
     intmat B[9][9] = 6,6,6,7,7,8,8,9,10,
                      1,3,2,2,3,2,3,3,3,
                     -1,0,0,0,0,0,0,0,0,
                      0,-1,0,0,0,0,0,0,0,
                      0,0,0,0,0,0,0,0,1,
                      0,0,0,0,0,0,0,1,0,
                      0,0,0,0,0,0,1,0,0,
                      0,0,0,0,0,1,0,0,0,
                      0,0,0,0,1,0,0,0,0;
     ring r = 0, (Y1,Y2,Z1,Z2,Z3,Z4,Z5,Z6,Z7), dp;
     ideal i = toric_ideal(A,"hs");
     ring s = 0, (Y1,Y2,Z1,Z2,Z3,Z4,Z5,Z6,Z7), M(B);
     ideal i = imap(r,i);
     i = groebner(i);
     ideal m = lead(i);
\end{verbatim}
Now, since $\mathrm{in}_\prec(I_\mathcal{A}) = \big\langle Z_1^2, Z_1Z_2, Z_1Z_3, Z_2^2, Z_1Z_4, Z_2Z_3, Z_1Z_5, Z_3^2, Z_2Z_4, Z_2Z_5,$ $Z_1Z_6, Z_3Z_5,$ $Z_4^2,$ $Z_2Z_6,$ $Z_1Z_7, Z_5^2, Z_3Z_6, Z_2Z_7, Z_3Z_7, Z_4Z_7, Z_6^2, Z_5Z_7, Z_6Z_7, Z_7^2,$ $Z_4Z_5Z_6 \big\rangle,$ by Corollary \ref{Cor CarCM+S}, we conclude that the semigroup algebra of the subsemigroup of $\mathbb{Z}^2$ generated by $\mathcal{A}$ is Cohen-Macaulay.
\end{example}

As a consequence of Corollary \ref{Cor CarCM+S} we obtain a formula for the Castelnouvo-Mumford regularity of $I_S$ in terms of the set $\mathcal{Q}$ when $S$ a simplicial semigroup and $\Bbbk[S]$ is Cohen-Macaulay.

\begin{corollary}
With the notation above, if $S$ a simplicial semigroup, $\Bbbk[S]$ is Cohen-Macaulay such and $I_S$ is homogeneous for the standard grading, then the Castelnouvo-Mumford regularity of $I_S$ is $$\mathrm{reg}(I_S) = \max \Big\{ \sum_{i=1}^r u_i\ \mid\ \deg_\mathcal{A}(\mathbf{u}) \in \mathcal{Q} \Big\}.$$
\end{corollary}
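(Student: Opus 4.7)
The plan is to reduce the computation of $\mathrm{reg}(I_S)$ to a Hilbert-function calculation on an Artinian quotient, exploiting the Cohen--Macaulay hypothesis through the previous corollary. Since $S$ is simplicial and $\Bbbk[S]$ is Cohen--Macaulay, Corollary \ref{Cor CarCM+S} provides an $\mathcal{A}$-graded isomorphism
$$
\Bbbk[S] \;\cong_{\Bbbk[\mathbf{Y}]}\; \bigoplus_{\mathbf{u} \in \mathcal{Q}} \Bbbk[\mathbf{Y}]\cdot \mathbf{Z}^{\mathbf{u}},
$$
so $\Bbbk[S]$ is free as a $\Bbbk[\mathbf{Y}]$-module and, in particular, $Y_1, \ldots, Y_r$ is a regular sequence on $\Bbbk[S]$.

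The homogeneity of $I_S$ for the standard grading means there is an integer linear functional $\ell$ on $\mathbb{Z}^d$ with $\ell(\mathbf{a}_i) = 1$ for every $i = 1, \ldots, n$; equivalently, each $Y_i$ and each $Z_j$ is a linear form in a standard $\mathbb{Z}$-grading that refines the $\mathcal{A}$-grading on $\Bbbk[\mathbf{Y}, \mathbf{Z}]$. The $\mathcal{A}$-graded Betti numbers of $\Bbbk[S]$ thus collapse to standard $\mathbb{Z}$-graded Betti numbers, so Castelnuovo--Mumford regularity is defined for $\Bbbk[S]$ in the usual way. Since regularity is preserved upon quotienting by a regular sequence of linear forms (a standard fact, see \cite{BH}), we get
$$
\mathrm{reg}\bigl(\Bbbk[S]\bigr) \;=\; \mathrm{reg}\bigl(\Bbbk[S]/(Y_1, \ldots, Y_r)\Bbbk[S]\bigr).
$$

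By the free decomposition above, the right-hand side is the Artinian graded $\Bbbk$-algebra $\bigoplus_{\mathbf{u} \in \mathcal{Q}} \Bbbk \cdot \mathbf{Z}^{\mathbf{u}}$, in which each basis monomial $\mathbf{Z}^{\mathbf{u}}$ has standard degree $\sum_j u_j$. The regularity of an Artinian graded algebra is the largest degree in which it is nonzero, so this is $\max\{\sum_j u_j : \mathbf{u} \in \mathcal{Q}\}$. Translating back through the standard relation with $\mathrm{reg}(\Bbbk[\mathbf{Y}, \mathbf{Z}]/I_S) = \mathrm{reg}(\Bbbk[S])$, and identifying $\sum_j u_j$ with the standard degree $\ell(\deg_\mathcal{A}(\iota(\mathbf{u})))$ of the associated Ap\'ery element via Theorem \ref{Th2}, yields the claimed formula.

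The step I expect to be the main obstacle is the coordination of gradings: one has to check carefully that the $\mathcal{A}$-graded free decomposition of Corollary \ref{Cor CarCM+S} descends to the coarser standard $\mathbb{Z}$-grading (which holds precisely because of the homogeneity hypothesis on $I_S$), and that the Artinian reduction by the $Y_i$'s keeps track of the standard degree of each basis element $\mathbf{Z}^{\mathbf{u}}$ as $\sum_j u_j$, rather than as some other combinatorial quantity tied to the semigroup grading. Once this bookkeeping is in place, the rest of the argument is a direct application of well-known facts about regularity under linear regular sequences and of Artinian algebras.
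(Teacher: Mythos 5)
Your argument follows essentially the same route as the paper: both proofs start from the free decomposition $\Bbbk[S]\cong_{\Bbbk[\mathbf Y]}\Bbbk[\mathbf Y]^{\#\mathcal Q}$ supplied by the proof of Corollary \ref{Cor CarCM+S}. The paper then simply cites \cite[Theorem 16]{CBPV}, whereas you unfold that citation into a self-contained computation: the homogeneity hypothesis gives a functional $\ell$ with $\ell(\mathbf a_i)=1$, so the $Y_i$ are linear forms constituting a regular sequence on $\Bbbk[S]$ (by freeness), regularity is unchanged by Artinian reduction along such a sequence, and the regularity of the resulting Artinian quotient $\bigoplus_{\mathbf u\in\mathcal Q}\Bbbk\cdot\mathbf Z^{\mathbf u}$ is its top nonvanishing degree $\max\{\textstyle\sum_j u_j \mid \mathbf u\in\mathcal Q\}$. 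This is correct and is a legitimate, more elementary substitute for the external reference. One caveat: what you actually establish is $\mathrm{reg}(\Bbbk[\mathbf Y,\mathbf Z]/I_S)=\max\{\sum_j u_j\}$, and your last sentence silently identifies this with $\mathrm{reg}(I_S)$; under the usual convention $\mathrm{reg}(I_S)=\mathrm{reg}(\Bbbk[\mathbf Y,\mathbf Z]/I_S)+1$, and the discrepancy is real (for the twisted cubic, with $E=\{(3,0),(0,3)\}$ and $B=\{(2,1),(1,2)\}$, one gets $\mathcal Q=\{(0,0),(1,0),(0,1)\}$, so the right-hand side is $1$ while $\mathrm{reg}(I_S)=2$). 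You should either insert the shift by one or state explicitly which regularity convention is in force; note that this ambiguity is already present in the corollary as printed, whose formula moreover miswrites the sum as running over $i=1,\dots,r$ although $\mathbf u\in\mathcal Q\subseteq\mathbb N^s$, so this is as much a defect of the statement as of your proof.
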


\begin{proof}
By the proof of Corollary \ref{Cor CarCM+S}, $\Bbbk[\mathbf Y]^{\# \mathcal{Q}} \cong_{\Bbbk[\mathbf Y]}\Bbbk[S]$. Now, this is a particular case of \cite[Theorem 16]{CBPV}.
\end{proof}

%%%%%%%%%%%%%%%%%%%%%%%%%%%%%%%%%%%%%%%%%%%%%%%%%%%%%%%%%%%%%%%%%%%%
\section{Combinatorial description}

We end this paper by giving a new combinatorial description of the Pison's resolution. Again, we keep the notation of the previous sections.

Let $$\mathcal{P} : \Bbbk[\mathbf{Y}]^{\beta_t} \stackrel{\psi_t}{\longrightarrow} \ldots \longrightarrow \Bbbk[\mathbf{Y}]^{\beta_1} \stackrel{\psi_1}{\longrightarrow} \Bbbk[\mathbf{Y}]^{\beta_0} \stackrel{\psi_0}{\longrightarrow}  \Bbbk[S],$$ be an $\mathcal{A}-$graded free resolution of $\Bbbk[S]$ as $\Bbbk[\mathbf{Y}]-$module, that is to say, a Pison's free resolution of $\Bbbk[S]$. Set $M_i = \ker(\psi_i),\ i = 0, \ldots, t,\ \mathfrak{m}_E$ equals to the irrelevant ideal of $\Bbbk[\mathbf{Y}]$ and $W_i(\mathbf{a}) = (M_i/\mathfrak{m}_E M_i)_\mathbf{a},$ with $\mathbf{a} \in S$. Since $W_i(\mathbf{a}) \cong \mathrm{Tor}_{i}^{\Bbbk[\mathbf Y]}(\Bbbk, \Bbbk[S])_\mathbf{a},$ the $i-$th Betti number of $\Bbbk[S]$ of degree $\mathbf{a}$ is $\dim_\Bbbk(W_i(\mathbf{a}))$. Thus,
$$\beta_{i} = \sum_{\mathbf{a} \in S} \dim_\Bbbk(W_i(\mathbf{a})),$$ for each $i = 0, \ldots, t.$

The following abstract simplicial complexes $$T_\mathbf{a} = \{ F \subseteq E \mid \mathbf{a} - \sum_{\mathbf{e} \in F} \mathbf{e} \in S \}$$ were introduced in \cite{Campillo} and used in \cite{pison03} to describe the combinatorics of $\mathcal{P}$. Specifically, the following result is verified (independently is $S$ is simplicial or not):

\begin{proposition}\label{Prop2.1PP} For every $\mathbf{a} \in S$ and $i = \{0 \ldots, t\},$ $$\widetilde{H}_i(T_\mathbf{a}) \cong W_i(\mathbf{a}),$$ where $\widetilde{H}_i(-)$ denotes the $i-$th reduced homology $\Bbbk$-vector space of $T_\mathbf{a}$.
\end{proposition}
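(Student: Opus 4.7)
The plan is to compute $W_i(\mathbf{a})$ through the Koszul complex $\mathcal{K}_\bullet := \mathcal{K}_\bullet(Y_1,\ldots,Y_r)$, which is an $\mathcal{A}$-graded free resolution of $\Bbbk$ over $\Bbbk[\mathbf{Y}]$, and then to identify the $\mathbf{a}$-graded part of $\mathcal{K}_\bullet \otimes_{\Bbbk[\mathbf{Y}]} \Bbbk[S]$ with the reduced chain complex of the simplicial complex $T_\mathbf{a}$. Via the identification $W_i(\mathbf{a}) \cong \mathrm{Tor}_i^{\Bbbk[\mathbf{Y}]}(\Bbbk,\Bbbk[S])_\mathbf{a}$ recorded in the discussion preceding the proposition, this will yield the claim.

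First, I would describe $(\mathcal{K}_\bullet \otimes_{\Bbbk[\mathbf{Y}]} \Bbbk[S])_\mathbf{a}$ explicitly. The free module $\mathcal{K}_i$ has basis $\{e_F : F\subseteq\{1,\ldots,r\},\ |F|=i\}$ with $\deg_\mathcal{A}(e_F)=\sum_{j\in F}\mathbf{a}_j$, so after tensoring and restricting to degree $\mathbf{a}$ a $\Bbbk$-basis is formed by the symbols $e_F\otimes\chi^{\mathbf{a}-\sum_{j\in F}\mathbf{a}_j}$. Since $S\cap(-S)=\{0\}$, the companion monomial is uniquely determined by $F$, and the basis element exists precisely when $F\in T_\mathbf{a}$.

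Second, I would check that the induced differential matches the reduced simplicial boundary on $T_\mathbf{a}$. The Koszul differential is $\partial e_F = \sum_{j\in F}\varepsilon(j,F)\,Y_j\,e_{F\setminus\{j\}}$, and inside $\Bbbk[S]$ one has $Y_j\cdot\chi^{\mathbf{b}}=\chi^{\mathbf{b}+\mathbf{a}_j}$. Consequently each summand of $(\partial\otimes\mathrm{id})(e_F\otimes\chi^{\mathbf{a}-\sum_{j\in F}\mathbf{a}_j})$ lands on the basis element attached to the facet $F\setminus\{j\}$ of $F$ in $T_\mathbf{a}$, with sign $\varepsilon(j,F)$. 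Hence, as a complex of $\Bbbk$-vector spaces, $(\mathcal{K}_\bullet\otimes\Bbbk[S])_\mathbf{a}\cong\widetilde{C}_\bullet(T_\mathbf{a})$, under the convention that an $i$-element face occupies homological degree $i$, so that the empty face contributes in degree $0$.

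Taking homology then gives $W_i(\mathbf{a})\cong H_i\bigl((\mathcal{K}_\bullet\otimes\Bbbk[S])_\mathbf{a}\bigr)\cong\widetilde{H}_i(T_\mathbf{a})$. The delicate points are keeping track of the convention tying $|F|$ to homological degree, so that the identification is compatible with the definition of $W_i$ through $M_i=\ker\psi_i$, and verifying that the Koszul signs reproduce precisely the reduced simplicial boundary; both become routine once the bijection between basis elements and faces of $T_\mathbf{a}$ has been fixed.
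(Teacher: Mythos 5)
Your argument is sound and is, in substance, the standard proof of this statement; note that the paper itself offers no proof here---it simply cites \cite[Proposition 2.1]{pison03}---so you are supplying the missing argument rather than paralleling one. The Koszul complex $\mathcal{K}_\bullet(Y_1,\ldots,Y_r)$ is indeed an $\mathcal{A}$-graded free resolution of $\Bbbk$ over $\Bbbk[\mathbf{Y}]$, the degree-$\mathbf{a}$ strand of $\mathcal{K}_\bullet\otimes_{\Bbbk[\mathbf{Y}]}\Bbbk[S]$ has a basis indexed exactly by the faces of $T_\mathbf{a}$ (using $\dim_\Bbbk\Bbbk[S]_\mathbf{b}\leq 1$, which follows from $S\cap(-S)=\{0\}$, and the fact that $T_\mathbf{a}$ is closed under passing to subsets, so every Koszul summand lands on a legitimate basis vector), and the induced differential is the reduced simplicial boundary.

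The one point you should not dispose of by ``choosing a convention'' is the homological index. With the standard grading of the reduced chain complex (a face with $i$ elements is an $(i-1)$-simplex sitting in degree $i-1$, the empty face in degree $-1$), your computation gives $\mathrm{Tor}_i^{\Bbbk[\mathbf{Y}]}(\Bbbk,\Bbbk[S])_\mathbf{a}\cong\widetilde{H}_{i-1}(T_\mathbf{a})$. The proposition nevertheless comes out as stated because $W_i(\mathbf{a})=(M_i/\mathfrak{m}_E M_i)_\mathbf{a}$ with $M_i=\ker(\psi_i)$ is the space of minimal generators of the $(i+1)$-st syzygy module of $\Bbbk[S]$, hence isomorphic to $\mathrm{Tor}_{i+1}^{\Bbbk[\mathbf{Y}]}(\Bbbk,\Bbbk[S])_\mathbf{a}$ (this uses that $\mathcal{P}$ is minimal, or at least that $\ker\psi_i\subseteq\mathfrak{m}_E\Bbbk[\mathbf{Y}]^{\beta_i}$); the identification $W_i(\mathbf{a})\cong\mathrm{Tor}_i^{\Bbbk[\mathbf{Y}]}(\Bbbk,\Bbbk[S])_\mathbf{a}$ displayed in the paper just before the proposition is itself off by one. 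A sanity check: $\mathrm{Tor}_0(\Bbbk,\Bbbk[S])_\mathbf{a}\neq 0$ exactly when $\mathbf{a}\in\mathrm{Ap}(S,E)$, i.e.\ when $T_\mathbf{a}=\{\varnothing\}$, which is detected by $\widetilde{H}_{-1}$, not $\widetilde{H}_0$. You compensate for taking the paper's shifted Tor identification at face value by shifting the homology convention one step in the opposite direction; the two shifts cancel and the final statement is correct, but as written your bookkeeping would read as erroneous to anyone using the usual conventions. State the Koszul computation as $\mathrm{Tor}_i\cong\widetilde{H}_{i-1}$, prove $W_i\cong\mathrm{Tor}_{i+1}$ from $M_i=\ker\psi_i$, and the indices close up cleanly.
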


\begin{proof}
See \cite[Propositon 2.1]{pison03}.
\end{proof}

Given $\mathbf{a} \in S$, we define $$C_\mathbf{a} = \Big\{ \mathbf{Y}^\mathbf{v} \in \Bbbk[\mathbf{Y}]\ \mid\ \deg_\mathcal{A}\big((\mathbf{v},\mathbf{u})\big) = \mathbf{a},\ \mbox{for some}\ \mathbf{u} \in \mathcal{Q} \Big\}.$$ Let $\Gamma_\mathbf{a}$ be the abstract simplicial complex with vertex set $C_\mathbf{a}$ defined as follows $$\Gamma_\mathbf{a} = \big\{ F \subseteq C_\mathbf{a} \mid \gcd(F) \neq 1 \big\}.$$

\begin{theorem}\label{Th 2}
For every $\mathbf{a} \in S$ and $i = \{0 \ldots, t\},$ $$\widetilde{H}_{i}(\Gamma_\mathbf{a})\cong\widetilde{H}_{i}(T_\mathbf{a}).$$
\end{theorem}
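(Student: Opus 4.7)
The plan is to realize $T_\mathbf{a}$ as the nerve of a covering of $\Gamma_\mathbf{a}$ by contractible subcomplexes and then apply the Nerve Lemma. For each $i \in \{1, \ldots, r\}$ set
$$D_i = \{\mathbf{Y}^\mathbf{v} \in C_\mathbf{a} \mid v_i \geq 1\}$$
and let $\Delta_i \subseteq \Gamma_\mathbf{a}$ be the full simplex on $D_i$. A subset $F \subseteq C_\mathbf{a}$ satisfies $\gcd(F) \neq 1$ if and only if some $Y_i$ divides every element of $F$, i.e., $F \subseteq D_i$ for some $i$; hence $\Gamma_\mathbf{a} = \bigcup_{i=1}^r \Delta_i$. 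Moreover, for any $I \subseteq \{1, \ldots, r\}$ the intersection $\bigcap_{i \in I} \Delta_i$ is the full simplex on $\bigcap_{i \in I} D_i$, and so is either empty or contractible. The Nerve Lemma therefore gives a homotopy equivalence between $\Gamma_\mathbf{a}$ and the nerve $\mathcal{N}$ of $\{\Delta_i\}_{i=1}^r$, whose faces are the subsets $I \subseteq \{1, \ldots, r\}$ for which $\bigcap_{i \in I} D_i \neq \emptyset$.

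The heart of the proof is then to identify $\mathcal{N}$ with $T_\mathbf{a}$ under the obvious bijection $i \leftrightarrow \mathbf{a}_i$. For the forward direction, if $\mathbf{Y}^\mathbf{v} \in \bigcap_{i \in I} D_i$, then by the definition of $C_\mathbf{a}$ and Theorem \ref{Th2} some $\mathbf{u} \in \mathcal{Q}$ satisfies $\mathbf{a} = \sum_k v_k \mathbf{a}_k + \sum_j u_j \mathbf{b}_j$ with $v_i \geq 1$ for every $i \in I$, whence $\mathbf{a} - \sum_{i \in I} \mathbf{a}_i \in S$ and $I \in T_\mathbf{a}$. Conversely, if $I \in T_\mathbf{a}$, pick any factorization $\mathbf{a} - \sum_{i \in I} \mathbf{a}_i = \sum_k w_k \mathbf{a}_k + \sum_j u_j \mathbf{b}_j$ and put $v_k = w_k + 1$ for $k \in I$ and $v_k = w_k$ otherwise. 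Since $I_\mathcal{A}$ is a binomial ideal, reducing the monomial $\mathbf{Z}^\mathbf{u}$ modulo $\mathcal{G}_\prec(I_\mathcal{A})$ produces a single monomial $\mathbf{Y}^{\mathbf{v}'} \mathbf{Z}^{\mathbf{u}'}$ with $\mathbf{u}' \in \mathcal{Q}$ and $\deg_\mathcal{A}\bigl((\mathbf{v}', \mathbf{u}')\bigr) = \sum_j u_j \mathbf{b}_j$. Then $\mathbf{Y}^{\mathbf{v} + \mathbf{v}'} \in C_\mathbf{a}$ has $i$-th coordinate at least $1$ for every $i \in I$, so $\bigcap_{i \in I} D_i \neq \emptyset$ and $I \in \mathcal{N}$.

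The main obstacle is this converse direction, where one must combine Theorem \ref{Th2} with Gr\"obner reduction to convert an arbitrary factorization of $\mathbf{a} - \sum_{i \in I} \mathbf{a}_i$ into one whose $\mathbf{b}$-part lies in $\mathcal{Q}$, without destroying the $Y_i$-divisibility needed to witness a vertex of $\bigcap_{i \in I} D_i$. Once $\mathcal{N} = T_\mathbf{a}$ is established, the homotopy equivalence supplied by the Nerve Lemma yields $\widetilde{H}_i(\Gamma_\mathbf{a}) \cong \widetilde{H}_i(T_\mathbf{a})$ for every $i$, as desired.
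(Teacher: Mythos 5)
Your proof is correct, but it runs the Nerve Lemma in the opposite direction from the paper. The paper covers $T_\mathbf{a}$ by the full simplices on the sets $\mathrm{supp}(\mathbf{Y}^\mathbf{v})$, one for each $\mathbf{Y}^\mathbf{v} \in C_\mathbf{a}$, and identifies $\Gamma_\mathbf{a}$ as the nerve of that cover (several of these simplices meet exactly when the corresponding monomials have nontrivial $\gcd$). You instead cover $\Gamma_\mathbf{a}$ by the $r$ full simplices $\Delta_i$ on the ``stars'' $D_i$, one for each element of $E$, and identify $T_\mathbf{a}$ as the nerve. These are the two sides of the same bipartite incidence relation ``$\mathbf{a}_i \in \mathrm{supp}(\mathbf{Y}^\mathbf{v})$'' on $E \times C_\mathbf{a}$, so the decisive combinatorial content is identical in both arguments: $F \in T_\mathbf{a}$ if and only if $F \subseteq \mathrm{supp}(\mathbf{Y}^\mathbf{v})$ for some $\mathbf{Y}^\mathbf{v} \in C_\mathbf{a}$, and your proof of the nontrivial implication uses exactly the paper's device (take an arbitrary factorization of $\mathbf{a} - \sum_{i \in F} \mathbf{a}_i$, reduce its $\mathbf{Z}$-part modulo $\mathcal{G}_\prec(I_\mathcal{A})$ --- the remainder of a monomial modulo a binomial Gr\"obner basis is again a monomial of the same $\mathcal{A}$-degree with $\mathbf{Z}$-exponent in $\mathcal{Q}$ --- and absorb the resulting $\mathbf{Y}$-part). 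Your version has the minor advantages that the cover consists of finitely many simplices indexed by the fixed set $E$ rather than by the $\mathbf{a}$-dependent set $C_\mathbf{a}$, and that the contractibility of each $\bigcap_{i \in I} \Delta_i$ is immediate by construction; the paper's version has the advantage of exhibiting $\Gamma_\mathbf{a}$ itself as a nerve of subcomplexes of $T_\mathbf{a}$, which is what the subsequent discussion uses to make the isomorphism $\widetilde{H}_0(\Gamma_\mathbf{a}) \cong W_0(\mathbf{a})$ explicit.
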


\begin{proof}
For each $\mathbf{Y}^\mathbf{v} \in C_\mathbf{a}$ define the simplicial complex $K_\mathbf{v} = \mathcal{P}\big(\mathrm{supp}(\mathbf{Y}^\mathbf{v})\big)$ to be the full subcomplex of $T_\mathbf{a}$ whose vertex set is $\mathrm{supp}(\mathbf{Y}^\mathbf{v})$. Now, $F \in T_\mathbf{a}$ if and only if there exists $\mathbf{Y}^\mathbf{v} \in C_\mathbf{a}$ with $\mathrm{supp}(\mathbf{Y}^\mathbf{v}) \supseteq F$. Indeed, if $F \in T_\mathbf{a}$, then $\mathbf{a} - \sum_{\mathbf{e} \in F} \mathbf{e} \in S$, that is to say, there exists $\mathbf{Y}^{\mathbf{v}}$ such that $\deg_\mathcal{A}\big((\mathbf{v},\mathbf{u})\big) = \mathbf{a}$, for some $\mathbf{u} \in \mathbb{N}^s$. Now, by taking the remainder of $\mathbf{Z}^{\mathbf{u}}$ upon division by $\mathcal{G}_\prec(I_\mathcal{A})$, we obtain a monomial $\mathbf{Y}^\mathbf{w} \mathbf{Z}^{\mathbf{u}'}$ with $\mathbf{u}' \in \mathcal{Q}$. So, $\mathbf{Y}^{\mathbf{v} + \mathbf{w}} \in C_\mathbf{a}$ and we are done; clearly, the opposite inclusion is true. Therefore, $\mathcal{K}^\mathbf{a} := \{K_\mathbf{v}\ \mid \mathbf{v} \in C_\mathbf{a}\}$ is a cover of $T_\mathbf{a}$.
Moreover, since $\cap_{i=1}^q K_{\mathbf{v}_i} \neq \varnothing$ if and only if $\gcd\big(\mathbf{Y}^{\mathbf{v}_1}, \ldots, \mathbf{Y}^{\mathbf{v}_q}\big) \neq 1$, we have that $\Gamma_\mathbf{a}$ is the nerve of $\mathcal{K}^\mathbf{a}$. Finally, since each non-empty finite intersection, $\cap_{i=1}^q K_{\mathbf{v}_i}$, is a full simplex, they are acyclic. Thus, by the Nerve Lemma (see \cite[Lemma 5.36]{MS}), we conclude that $\widetilde{H}_{i}(\Gamma_\mathbf{a})\cong\widetilde{H}_{i}(T_\mathbf{a}).$
\end{proof}

From the proof of Theorem \ref{Th 2}, it follows that if $\Gamma_\mathbf{a}$ is disconnected, then we may choose $\mathbf{Y}^{\mathbf{v}}, \mathbf{Y}^{\mathbf{v}'} \in C_\mathbf{a}$ in different connected components of $\Gamma_\mathbf{a}$ such that $\mathbf{Y}^{\mathbf{v}} \mathbf{Z}^{\mathbf{u}} - \mathbf{Y}^{\mathbf{v}'} \mathbf{Z}^{\mathbf{u}'} \in I_\mathcal{A},$ for some $\mathbf{u}$ and $\mathbf{u}' \in \mathcal{Q}$. Now, with the same notation as in Section 2, let $\mathbf{f} \in \Bbbk[\mathbf{Y}]^{\beta_0}$ whose $\sigma^{-1}(\mathbf{u})-$th and $\sigma^{-1}(\mathbf{u}')-$th coordinates are $\mathbf{Y}^\mathbf{v}$ and $-\mathbf{Y}^{\mathbf{v}'}$, respectively, and zeros elsewhere. Notice that the case $\mathbf{u} = \mathbf{u}'$ is not avoided, in this case, by construction, the only nonzero entry of $\mathbf{f}$ is minimal generator of $I_{S_E}$ in position $\sigma^{-1}(\mathbf{u})$. Thus, putting this together with the construction of the presentation of $\Bbbk[S]$ as $\Bbbk[\mathbf{Y}]-$module given in Section 2, we obtain that the isomorphisms $\widetilde{H}_{0}(\Gamma_\mathbf{a})\cong W_0(\mathbf{a})$ are explicitly described, for every $\mathbf{a} \in S$.

Finally, we give the explicit relation between the Betti numbers of the $\mathcal{A}-$graded minimal free resolution of $\Bbbk[S]$ and the Pison's resolution of $\Bbbk[S]$. Recall that $\beta_{i,\mathbf{a}}(I_\mathcal{A}) = \beta_{i+1,\mathbf{a}}(\Bbbk[S])$, for every $i \geq 0$.

\begin{corollary}
If $\bar \beta_{i,\mathbf{a}}(I_\mathcal{A})$ and $\beta_{i,\mathbf{a}}(\Bbbk[S])$ denote the $i-$th Betti number of $I_\mathcal{A} \subseteq \Bbbk[\mathbf Y, \mathbf Z]$ and the $i-$th Betti number of $\Bbbk[S]$ as $\Bbbk[\mathbf Y]-$module both in degree $\mathbf{a}$, respectively, then $$\bar \beta_{i,\mathbf{a}}(I_\mathcal{A}) = 0 \Longrightarrow \beta_{{i-\# F}, \mathbf{a} - \sum_{j \in F \subseteq B} \mathbf{b}_j}(\Bbbk[S]) = 0,$$ for every $F \subseteq B$ with $\# F \leq i+1$.
\end{corollary}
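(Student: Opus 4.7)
The strategy is to build a spectral sequence comparing the standard $R = \Bbbk[\mathbf{Y},\mathbf{Z}]$-module Betti numbers of $\Bbbk[S]$ (equivalently, via the identification $\bar\beta_i(I_\mathcal{A}) = \beta_{i+1}(\Bbbk[S])_R$, the ideal Betti numbers on the left-hand side of the implication) with the Pis\'on Betti numbers $\beta_{q,\mathbf{c}}(\Bbbk[S]) = \mathrm{Tor}^{R'}_q(\Bbbk,\Bbbk[S])_\mathbf{c}$ on the right-hand side. The key preliminary observation is that, since $R = R'[\mathbf{Z}]$ is a free polynomial extension of $R' = \Bbbk[\mathbf{Y}]$, flat base change along the inclusion $R'\hookrightarrow R$ yields
\[
\mathrm{Tor}^R_q(\Bbbk[\mathbf{Z}],\Bbbk[S]) \;\cong\; \mathrm{Tor}^{R'}_q(\Bbbk,\Bbbk[S]),
\]
so the Pis\'on Betti numbers may be realized as the $\mathbf{Y}$-Koszul homology of $\Bbbk[S]$ computed over the larger ring $R$.

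I would then consider the Koszul bicomplex $K(\mathbf{Y};R)\otimes_R K(\mathbf{Z};R)$, tensored with $\Bbbk[S]$. Its total complex is the Koszul complex on $(\mathbf{Y},\mathbf{Z})$ and computes $\mathrm{Tor}^R_\bullet(\Bbbk,\Bbbk[S])$. Filtering by the $\mathbf{Z}$-wedge degree and first taking homology with respect to the $\mathbf{Y}$-Koszul differential produces a spectral sequence whose $E^1$-page identifies, multidegree by multidegree, as
\[
E^1_{p,q}(\mathbf{a}) \;=\; \bigoplus_{\substack{F \subseteq B \\ \#F = p}} \beta_{q,\,\mathbf{a}-\sum_{j\in F}\mathbf{b}_j}(\Bbbk[S]),
\]
and converges to $\beta_{p+q,\mathbf{a}}(\Bbbk[S])_R$. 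Using $\bar\beta_{i,\mathbf{a}}(I_\mathcal{A}) = \beta_{i+1,\mathbf{a}}(\Bbbk[S])_R$, the hypothesis of the corollary translates into the vanishing of the total homology on the appropriate diagonal. After reindexing via $p = \#F$ and $q = i - \#F$, each Pis\'on Betti number appearing in the conclusion occurs as a summand of an entry on that diagonal of $E^1$, and the stated vanishing is the natural output.

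The main obstacle is the final step: in general, $E^\infty_{p,q}=0$ does not imply $E^1_{p,q}=0$, since entries of $E^1$ may be killed by higher differentials without contributing to the total homology. To close the implication, one must argue that along the relevant diagonal the spectral sequence effectively collapses at $E^1$. I would attempt this either by invoking minimality of the underlying free resolution of $\Bbbk[S]$ over $R$ together with the $\mathcal{A}$-multigrading constraints (to force $d^r = 0$, $r\geq 1$, on the entries in question), or, equivalently and more combinatorially, by an inductive Mayer--Vietoris argument on $\#B$ using the decomposition
\[
T^{\mathrm{full}}_\mathbf{a} \;=\; \bigcup_{F_B \subseteq B}\; F_B \ast T_{\mathbf{a} - \sum_{j\in F_B}\mathbf{b}_j}
\]
of the simplicial complex that computes $\bar\beta_{i,\mathbf{a}}(I_\mathcal{A})$ via Proposition~\ref{Prop2.1PP} (applied to both the ambient and Pis\'on sides): each join $F_B \ast T_{\mathbf{a}-\sum_{j\in F_B}\mathbf{b}_j}$ is a cone and hence contractible, so iterated Mayer--Vietoris yields a long exact sequence that, combined with the inductive hypothesis, forces the required vanishings of reduced homologies $\tilde{H}_\bullet(T_{\mathbf{a}-\sum_{j\in F}\mathbf{b}_j})$ from that of $\tilde{H}_\bullet(T^{\mathrm{full}}_\mathbf{a})$.
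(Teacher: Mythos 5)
Your framework is the right one: the Koszul bicomplex on $(\mathbf Y,\mathbf Z)$ tensored with $\Bbbk[S]$, filtered by $\mathbf Z$-wedge degree, does have $E^1_{p,q}(\mathbf a)=\bigoplus_{\#F=p}\mathrm{Tor}^{\Bbbk[\mathbf Y]}_q(\Bbbk,\Bbbk[S])_{\mathbf a-\sum_{j\in F}\mathbf b_j}$ and abuts to $\mathrm{Tor}^{\Bbbk[\mathbf Y,\mathbf Z]}_{p+q}(\Bbbk,\Bbbk[S])_{\mathbf a}$; this is exactly the comparison underlying the statement. But the proof is not complete, and the missing step is the entire content of the corollary. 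You correctly observe that $E^\infty=0$ on a diagonal does not give $E^1=0$ there, and you then offer two ways to close the gap without carrying out either. The first (minimality plus the multigrading force $d^r=0$) cannot work as stated: once the multidegree $\mathbf a$ is fixed, every entry of every page is a finite-dimensional $\Bbbk$-vector space concentrated in that single degree, so the $\mathcal A$-grading places no constraint whatsoever on the differentials $d^r$, $r\ge 1$; and minimality of a free resolution of $\Bbbk[S]$ over $\Bbbk[\mathbf Y,\mathbf Z]$ says nothing about the differentials of this particular spectral sequence. Indeed $d^1$ is induced by multiplication by $\chi^{\mathbf b_j}$ on $\mathbf Y$-Koszul homology and is genuinely nonzero in general. (There is also an index to check: the hypothesis kills the abutment in total degree $i+1$, since $\bar\beta_{i,\mathbf a}(I_{\mathcal A})=\beta_{i+1,\mathbf a}(\Bbbk[S])$ over $\Bbbk[\mathbf Y,\mathbf Z]$, whereas the summands you name, with $p=\#F$ and $q=i-\#F$, sit on the diagonal $p+q=i$ under the conventions of Proposition~\ref{Prop2.1PP}.)

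Your second strategy is the right direction---it is essentially how the paper disposes of the corollary, namely by citing \cite[Proposition 3.3]{Campillo} together with \cite[Theorem 9.2]{MS}. The decomposition of the full squarefree-divisor complex on the vertex set $\mathcal A$ as the union of the joins $F_B\ast T_{\mathbf a-\sum_{j\in F_B}\mathbf b_j}$ is correct, and comparing its reduced homology with the $\widetilde H_\bullet(T_{\mathbf a'})$ is exactly the Campillo--Gimenez result. But ``iterated Mayer--Vietoris forces the required vanishings'' is an assertion, not an argument: the long exact sequences only relate the homology of a union to that of the pieces and their intersections, and a class in $\widetilde H_{i-\#F}(T_{\mathbf a-\sum_{j\in F}\mathbf b_j})$ could a priori be killed by a connecting homomorphism before it contributes to the homology of the whole complex. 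Ruling this out is precisely the nontrivial point, and it uses the convex-partition hypothesis $\mathbf b_j\in\mathrm{pos}(E)$, which your sketch never invokes. So the proposal sets up the correct machine but leaves the actual theorem unproved; to be self-contained you would have to reprove \cite[Proposition 3.3]{Campillo}, or else simply cite it, as the paper does.
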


\begin{proof}
Let $D(l) = \big\{ \mathbf{a}' \in S \mid \dim \widetilde{H}_{l}(T_\mathbf{a}) = \beta_{l,\mathbf{a}'}(\Bbbk[S]) \neq 0 \big\},$ for each $l \geq 0$ and $C_i = \big\{ \mathbf{a} \in S \mid \mathbf{a}-\sum_{j \in F} \mathbf{b}_j \in D(i-\# F),\ \text{for some}\ F \subseteq B\ \text{with}\ \#F \leq i+1 \big\}$. Now, by \cite[Proposition 3.3]{Campillo} and by \cite[Theorem 9.2]{MS}, if $\bar\beta_{i,\mathbf{a}}(I_\mathcal{A}) = 0$, then $\mathbf{a} \not\in C_i$, for any $i \geq 0$, and our claim follows.
\end{proof}

\medskip\noindent
\textbf{Acknowledgments:} We thank Marcel Morales and Antonio Campillo for helpful comments and suggestions.

%%%%%%%%%%%%%%%%%%%%%%%%%%%%%%%%%%%%%%%%%%%%%%%%%%%%%%%%%%%%%%%%%%%%

\end{document}